\newtheorem{theorem}{Theorem}
\newcommand*\mymatrixbraceright[4][m]{
 \draw[mymatrixbrace] (#1.east|-#1-#2-1.north east) -- node[right=2pt] {#4} (#1.east|-#1-#3-1.south east); }
\newcommand*\mymatrixbracetop[4][m]{
    \draw[mymatrixbrace] (#1.north-|#1-1-#2.north west) -- node[above=2pt] {#4} (#1.north-|#1-1-#3.north east);
}
\newcommand{\namealgnospace}{AT-MGRIT}
\newcommand{\namealg}{AT-MGRIT }
\newcommand{\timeint}{(t_0,t_f]}
\title{Asynchronous Truncated Multigrid-reduction-in-time (AT-MGRIT) \thanks{This work is supported by the BMBF (project PASIROM; grant 05M18PXB). BSS was supported as a Nicholas C. Metropolis Fellow under the Laboratory Directed Research and Development program of Los Alamos National Laboratory.}}
\date{\vspace{-5ex}}
\author[1]{Jens Hahne}
\author[2]{Ben S. Southworth}
\author[1]{Stephanie Friedhoff}
\affil[1]{Department of Mathematics, Bergische Universität Wuppertal}
\affil[2]{Theoretical Division, Los Alamos National Laboratory, USA}
\affil[ ]{\textit {jens.hahne@math.uni-wuppertal.de, southworth@lanl.gov, friedhoff@math.uni-wuppertal.de}}
\begin{document}
\allowdisplaybreaks
\maketitle

\begin{abstract}
In this paper, we present the new ``asynchronous truncated multigrid-reduction-in-time'' (\namealgnospace) algorithm for introducing time
parallelism to the solution of discretized time-dependent problems. The new algorithm is based on the multigrid-reduction-in-time (MGRIT) approach, which, in certain settings, is equivalent to another common multilevel parallel-in-time method, Parareal. In contrast to Parareal and MGRIT that both consider a global temporal grid over the entire time interval on the coarsest level, the \namealg algorithm uses truncated local time grids on the coarsest level, each grid covering certain temporal subintervals. These local grids can be solved completely independent from each other, which reduces the sequential part of the algorithm and, thus, increases parallelism in the method. Here, we study the effect of using truncated local coarse grids on the convergence of the algorithm, both theoretically and numerically, and show, using challenging nonlinear problems, that the new algorithm consistently outperforms classical Parareal/MGRIT in terms of time to solution.
\end{abstract}

%
%

\section{Introduction}

Time-dependent problems are classically solved by a time-stepping procedure that propagates the solution stepwise forward in time. The method is optimal, i.\,e., of order $\mathcal{O}(N_t)$ for $N_t$ time steps. However, this method quickly becomes a parallel bottleneck when using modern computer architectures, which have an increasing number of processors, yet stagnating processor clock speed. Due to the sequential nature of classical time stepping, parallelization is limited to the spatial domain, and, as the number of processors grows, spatial parallelization becomes exhausted even if more resources are available. Parallel-in-time methods use these resources of modern computer architectures to compute multiple time steps simultaneously, enabling spatial \emph{and} temporal parallelization. 

The development of the first parallel-in-time method goes back over 50 years \cite{JNievergelt_1964}, and an overview of the field can be found in \cite{Gander2015_Review}. Two of the best known methods are the Parareal method \cite{JLLions_etal_2001a} and the multigrid-reduction-in-time (MGRIT) algorithm \cite{RDFalgout_etal_2014}, both of which are based on multigrid reduction principles \cite{Ries_MGR_methods} applied in the time dimension. Parareal can be interpreted as a two-level multigrid method, and MGRIT generalizes the approach to a multilevel setting. The ideas of both methods are similar, and both methods are equivalent in certain settings. On the ``fine'' level(s), time integration is simultaneously (i.\,e., in parallel) applied to non-overlapping temporal subdomains, and on the coarsest level, the entire time interval is solved with sequential time stepping. The choice of the number of levels and the choice of the coarsest grid is both critical and challenging. The typical choice of the coarse grid in the two-level setting is based on the number of processes, choosing as many points on the coarse grid as there are processes available \cite{JLLions_etal_2001a}. With this strategy, the fine level can be perfectly parallelized, but for a large number of processes, the serial work on the coarsest level dominates the runtime. 

Strategies to reduce the runtime of two-level schemes include variants of the Parareal algorithm, such as asynchronous Parareal  \cite{8253031,math6040045}, a modified version enhanced by the asynchronous iterative scheme \cite{CHAZAN1969199}, or an adaptive Parareal algorithm, which increases the accuracy of the fine solver over the Parareal iterations. Using more than two grid levels can significantly reduce the serial work by using a coarsest grid with only a few time points, but the resulting very large time steps can be very expensive, if not infeasible, to compute for some applications \cite{Bolten_etal_2020} and/or may affect the convergence of the algorithm \cite{danieli2021multigrid}.

MGRIT and Parareal are primarily effective on parabolic-type problems \cite{BOng_JBSchroder_2020,Southworth_etal_2020a}, which have a naturally dissipative behavior over long time intervals. Here, we make the observation that, due to the dissipative behavior inherent to these problems, the coarsest grid probably does \emph{not} need to represent the full time domain. Indeed, the solution at time $t=0$ will often have a negligible effect on the solution at much later times. Thus, in many cases we believe that computing a global coarse grid introduces an unnecessary sequential computational effort to an otherwise parallel algorithm.

In this paper, we introduce a new way to define the coarsest level in Parareal and MGRIT, emphasizing reducing the serial work while avoiding large time steps. Instead of solving the entire time interval serially on the coarsest grid, we define multiple \emph{independent} \emph{local} coarse grids each consisting of $k$ coarse-grid time points that can be propagated independently and simultaneously.\footnote{This work was originally motivated by similar processor-local multigrid hierarchies used in geometric and algebraic multigrid for elliptic problems \cite{bank2015algebraic,mitchell2018advances,mitchell2021parallel}.} Such an approach offers both improved parallelism and reduced computational cost compared with a global coarse-grid solve, while still providing sufficient coarse-level information to each processor for rapid convergence of the global problem. Due to the asynchronous nature of computing the truncated coarsest grids, we refer to the new algorithm as ``asynchronous truncated MGRIT'' (\namealgnospace).

\Cref{sec:main} introduces the algorithm in a two-level and multilevel context, providing an FAS interpretation of the multilevel variant in \Cref{alg:at_mgrit}. In \Cref{sec:theory}, we analyze the new algorithm theoretically, derive two-level error propagators, and present two-level convergence bounds in \Cref{sec:theory:conv}. We then describe various properties of the algorithm in \Cref{sec:numerical_experiments}, including describing the implementation with associated communication scheme in \Cref{sec:implementation} and performing a parameter study for a model problem in \ref{sec:heat_equation}. Finally, we apply the new algorithm to two challenging nonlinear problems, a chemical reaction in \Cref{sec:parallel_experiments:gray_scott} and the simulation of a realistic model of an electrical machine in \Cref{sec:parallel_experiments:electrical_machine}. \namealg consistently offers a 5--30\% reduction in wallclock time compared with traditional MGRIT and Parareal, and we expect the speedup to be greater if the algorithms were applied on GPUs.

\section{An overlapping and asynchronous coarse grid}
\label{sec:main}

Consider an initial value problem of the form
\begin{equation}\label{eq:problem_system}
    	\mathbf{u}'(t) = \mathbf{f}(t,\mathbf{u}(t)), \quad \mathbf{u}(t_0) = \mathbf{g}_0, \quad t \in (t_0,t_f].
\end{equation}
We discretize \eqref{eq:problem_system} on a uniformly-spaced temporal grid $t_i = i\Delta t, \; i=0,1,\dots,N_t$, with constant step size $\Delta t = (t_f-t_0)/N_t$, and let $\mathbf{u}_i \approx \mathbf{u}(t_i)$ for $i = 0,\dots,N_t$ with $\mathbf{u}_0 = \mathbf{u}(0)$. A general form of a single step time integration method for the time-discrete initial value problem is
\begin{equation}\label{eq:one_step_disc}
		\mathbf{u}_i = \boldsymbol{\Phi}_{i}(\mathbf{u}_{i-1}) + \mathbf{g}_i, \quad i = 1,2,\dots,N_t,
\end{equation}
where $\boldsymbol{\Phi}_{i}$ is a one-step time integrator, propagating a solution $\mathbf{u}_{i-1}$ from a time point $t_{i-1}$ to time point $t_{i}$, and $\mathbf{g}_i$ contains forcing terms. Equation \eqref{eq:one_step_disc} can be written as a semi-linear matrix equation
	\[
		A(\mathbf{u})\equiv\begin{bmatrix}
			I\\
			-\boldsymbol{\Phi}_{1}(\cdot) & I\\
			& \ddots & \ddots\\
			& & -\boldsymbol{\Phi}_{N_t}(\cdot) & I
		\end{bmatrix}\begin{bmatrix}
			\mathbf{u}_0\\
			\mathbf{u}_1\\
			\vdots\\
			\mathbf{u}_{N_t}
		\end{bmatrix} = \begin{bmatrix}
			\mathbf{g}_0\\
			\mathbf{g}_1\\
			\vdots\\
			\mathbf{g}_{N_t}
		\end{bmatrix}\equiv \mathbf{g},
	\]
where $\boldsymbol{\Phi}_{i}(\cdot)$ indicates that $\boldsymbol{\Phi}_{i}$ is nonlinearly evaluated at the corresponding (block) vector entry. This system can be solved by a (linear) sequential block forward solve.

In contrast, the iterative \namealg algorithm solves the problem by updating multiple time points simultaneously. In the following, we first introduce the idea of the algorithm in \Cref{sec:algorithm} for the two-level case and explain how the algorithm works. We then discuss how the two-level method can be extended to a multilevel setting.

\subsection{Two-level \namealg algorithm}\label{sec:algorithm}

For a given time grid $t_i = i\Delta t, i=0,1,...,N_{t}$, and a given coarsening factor $m > 1$, we define a splitting of all time-points into $F$- and $C$-points, such that every $m$-th point is a $C$-point (note, non-uniform coarsening is also possible; uniform coarsening is used here to simplify presentation). This defines a global coarse grid of $C$-points $T_i=i\Delta T, i=0,1,...,N_{T}$, with time step $\Delta T = m\Delta t$; all other non-$C$-points are $F$-points. Based on this global coarse grid, we define $N_T+1$ overlapping \emph{local} coarse grids. Given a distance $k$, the $p$th local coarse grid, $\mathcal{T}^{(p)}$ for $p=0,...,N_T$, is given by 
\begin{equation*}
\mathcal{T}^{(p)}=\{i\Delta T: i\in[\max(0,p-k+1),p]\},
\end{equation*}
with time step size $\Delta T = m\Delta t$, as depicted in \Cref{fig:time_grid}.
\begin{center}
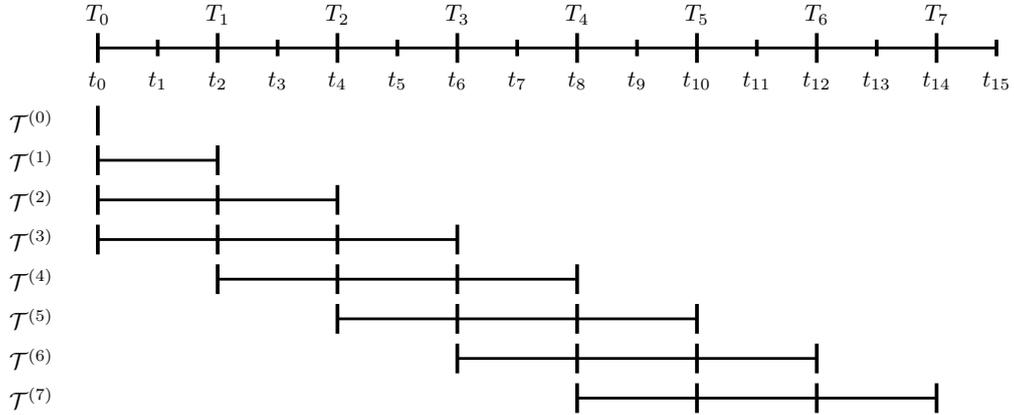
\begin{figure}
\centering
\begin{adjustbox}{width=\textwidth}
	\begin{tikzpicture}
	\def \j {0.9}
	\def \s {0}
	\def \m {2}
	\def \sl {-0.6}

	\draw[line width=1.2pt, -, >=latex'](0, \s) -- coordinate (x axis) (15*\j, \s) node[right] {}; 
	\foreach \x in {0,2,4,6,8,10,12,14} 
	{	
		\foreach \y in {1} 
		\draw[ultra thick, black] (\x*\j+\y*\j, \s+0.125) -- (\x*\j+\y*\j, \s-0.125) node[below] {} ;
		\draw[ultra thick, black] (\j*\x, \s+0.225) -- (\j*\x, \s-0.225) node[below] {} ;
	}

	\foreach \x in {0,1,...,15} 
	{	
		\draw (\j*\x,-.5) node {$t_{\x}$};
	}

		\foreach \x in {0,1,...,7} 
	{	
		\draw[black] (\j*2*\x,.5) node {$T_{\x}$};
	}
		
	\def \s {-0.5+1*\sl}
	\node at (-1,\s) {$\mathcal{T}^{(0)}$}; 
	\draw[line width=1.2pt, -, >=latex'](0,0+ \s) -- coordinate (x axis) (0*\j,0+ \s) node[right] {}; 
	\foreach \x in {0} 
		\draw[ultra thick, black] (\j*\m*\x,0.225+ \s) -- (\j*\m*\x,-0.225+ \s) node[below] {} ;

	\def \s {-0.5+2*\sl}
	\node at (-1,\s) {$\mathcal{T}^{(1)}$}; 
	\draw[line width=1.2pt, -, >=latex'](0,0+ \s) -- coordinate (x axis) (2*\j,0+ \s) node[right] {}; 
	\foreach \x in {0,1} 
		\draw[ultra thick, black] (\j*\m*\x,0.225+ \s) -- (\m*\j*\x,-0.225+ \s) node[below] {} ;

	\def \s {-0.5+3*\sl}
	\node at (-1,\s) {$\mathcal{T}^{(2)}$}; 
	\draw[line width=1.2pt, -, >=latex'](0,0+ \s) -- coordinate (x axis) (4*\j,0+ \s) node[right] {}; 
	\foreach \x in {0,1,2} 
		\draw[ultra thick, black] (\j*\m*\x,0.225+ \s) -- (\m*\j*\x,-0.225+ \s) node[below] {} ;

	\def \s {-0.5+4*\sl}
	\node at (-1,\s) {$\mathcal{T}^{(3)}$}; 
	\draw[line width=1.2pt, -, >=latex'](0,0+ \s) -- coordinate (x axis) (6*\j,0+ \s) node[right] {}; 
	\foreach \x in {0,1,2,3} 
		\draw[ultra thick, black] (\j*\m*\x,0.225+ \s) -- (\m*\j*\x,-0.225+ \s) node[below] {} ;

	\def \s {-0.5+5*\sl}
	\node at (-1,\s) {$\mathcal{T}^{(4)}$}; 
	\draw[line width=1.2pt, -, >=latex'](2*\j,0+ \s) -- coordinate (x axis) (8*\j,0+ \s) node[right] {}; 
	\foreach \x in {1,2,3,4} 
		\draw[ultra thick, black] (\j*\m*\x,0.225+ \s) -- (\m*\j*\x,-0.225+ \s) node[below] {} ;

	\def \s {-0.5+6*\sl}
	\node at (-1,\s) {$\mathcal{T}^{(5)}$}; 
	\draw[line width=1.2pt, -, >=latex'](4*\j,0+ \s) -- coordinate (x axis) (10*\j,0+ \s) node[right] {}; 
	\foreach \x in {2,3,4,5} 
		\draw[ultra thick, black] (\j*\m*\x,0.225+ \s) -- (\m*\j*\x,-0.225+ \s) node[below] {} ;

	\def \s {-0.5+7*\sl}
	\node at (-1,\s) {$\mathcal{T}^{(6)}$}; 
	\draw[line width=1.2pt, -, >=latex'](6*\j,0+ \s) -- coordinate (x axis) (12*\j,0+ \s) node[right] {}; 
	\foreach \x in {3,4,5,6} 
		\draw[ultra thick, black] (\j*\m*\x,0.225+ \s) -- (\m*\j*\x,-0.225+ \s) node[below] {} ;

	\def \s {-0.5+8*\sl}
	\node at (-1,\s) {$\mathcal{T}^{(7)}$}; 
	\draw[line width=1.2pt, -, >=latex'](8*\j,0+ \s) -- coordinate (x axis) (14*\j,0+ \s) node[right] {}; 
	\foreach \x in {4,5,6,7} 
		\draw[ultra thick, black] (\j*\m*\x,0.225+ \s) -- (\m*\j*\x,-0.225+ \s) node[below] {} ;		
		
	\end{tikzpicture}
	\end{adjustbox}
	\caption{\label{fig:time_grid}
Two-level temporal grid-hierarchy example for the \namealg algorithm with $N_t=15$, $m=2$ and $k=4$. The $C$-points (long markers) define the global coarse grid. For each point $p=0,\hdots,7$ on the global coarse grid, a local coarse grid $\mathcal{T}^{(p)}$ is created.}
\end{figure}\vspace{-3ex}
\end{center}
The two-level \namealg algorithm uses this time-grid hierarchy to solve time-dependent problems of the form \eqref{eq:one_step_disc} and is based on the following procedure: Given an initial solution $\mathbf{u}$ and the right-hand side $\mathbf{g}$, the first step of the algorithm applies a block relaxation, the so-called $F$-relaxation, to the fine space-time system of equations $A\textbf{u}= \textbf{g}$. The $F$-relaxation propagates the solution from a $C$-point to all following $F$-points preceding the next $C$-point (analogously to standard MGRIT/Parareal \cite{RDFalgout_etal_2014}). The relaxation of each interval of $F$-points can be executed in parallel and consists of $m-1$ sequential applications of the time integrator. In the next step, the global residual vector $\textbf{r}$ is computed and restricted by injection $(R_I^{(p)})$ to all local coarse grids. For each local coarse grid,the coarse system $A_c^{(p)}\textbf{u}^{(p)}_c= \textbf{r}^{(p)}_{c}$ is solved, which consists of $k-1$ sequential applications of the coarse time integrator. Since the coarse-grid problems are independent of each other, they can be solved simultaneously. Then, the global solution vector is corrected using ``selective ideal'' interpolation, $P_S^{(p)}$. The selective ideal interpolation is the transpose of an injection followed by an $F$-relaxation starting from exactly one point in time. More precisely, the approximation of the solution at the last time point of each local coarse grid is interpolated to the fine grid and then, an $F$-relaxation is performed using these interpolated points. The steps are applied iteratively until a desired quality of the solution is achieved. The two-level \namealg algorithm is summarized in \Cref{alg:two_level_at_mgrit}.
\begin{algorithm}
\caption{\label{alg:two_level_at_mgrit}\namealg($A, \mathbf{u}, \mathbf{g}$)}
\begin{algorithmic}[1]
\STATE \textbf{repeat}
\STATE \qquad Apply $F$-relaxation to $A\textbf{u}= \textbf{g}$
\STATE \qquad Compute residual $\textbf{r}=\textbf{g}-A\textbf{u}$
\STATE \qquad For $p=0$ to $N_T$:
\STATE \qquad \qquad Restrict residual, $\textbf{r}^{(p)}_c = R_I^{(p)}\textbf{r}$
\STATE \qquad \qquad Solve local system $A_c^{(p)}\textbf{u}^{(p)}_c= \textbf{r}^{(p)}_{c}$
\STATE \qquad \qquad Correct using $\textbf{u} = \textbf{u} + P_S^{(p)}\textbf{u}^{(p)}_{c} $
\STATE \textbf{until} stopping criterion is reached
\end{algorithmic}
\end{algorithm}

Note, that the \namealg algorithm solves for the exact solution in $N_T$ iterations if $k>1$. Furthermore, the algorithm is equivalent to the Parareal method if $k=N_T+1$, i.\,e., if all local coarse grids contain all $C$-points before in time. All components of the \namealg algorithm are highly parallel. The only communication needed is for the residual computation and the distribution of the residual (performed by the matrix-vector product $\textbf{r}^{(p)}_c = R_I^{(p)}\textbf{r}$ in \Cref{alg:two_level_at_mgrit}). Moreover, the coarse-level solve is communication-free (except for any communication that arises in spatial parallelism). This is particularly relevant for emerging heterogeneous computing architectures, where communication to and from GPU nodes can be quite expensive, and high efficiency is obtained with a low communication to computation ratio. For the coarse time integrator $\Phi_{i_c}$, here we choose a re-discretization of the problem with step size $\Delta T$, but other choices such as coarsening in space \cite{Ruprecht2014_GAMM,LunetEtAl2018,HowseEtAl2019} or order of discretization \cite{Nielsen2018,Falgout_etal_jcomp2019} can also be used.

\subsection{Multilevel FAS \namealg algorithm}\label{sec:algorithm2}

The two-level \namealg algorithm can easily be extended to the multilevel setting. 
Analogously to MGRIT, a multilevel hierarchy of temporal grids is constructed recursively using a uniform or non-uniform coarsening strategy. \namealg uses the same levels, coarsening, relaxation, and transfer operators as MGRIT on all finer levels in the hierarchy, but the coarsest MGRIT grid is replaced by local grids. \Cref{fig:multilevel_time_grid} shows an example grid hierarchy for three-level \namealg with $N_t=20$, $m=2$, and $k=4$. While MGRIT utilizes the global coarse grid on level 2, \namealg uses local grids $\mathcal{T}^{(2,p)}, p=0,\hdots,5$. 

In the following, we assume that all problem-dependent forcing terms are included in the time integrator. Then, the multilevel FAS \namealg $V$-cycle algorithm is given in \Cref{alg:at_mgrit}, where $N_t^{(\ell)}$ denotes the number of time points, and $\mathcal{A}^{(\ell)}\textbf{u}^{(\ell)}= \textbf{g}^{(\ell)}$ and $\mathcal{A}^{(\ell,p)}\textbf{u}^{(\ell,p)}=\textbf{g}^{(\ell,p)}$ specifies the space-time system of equations on levels $\ell=0,1,\hdots,L-1$ and on the local coarse grids $p=0,1,\hdots,N_t^{(\ell)}$, respectively. On all except for the coarsest level, we use restriction by injection $(R_{I}^{(\ell)})$, ``ideal'' interpolation $(P^{(\ell)})$, and $F(CF)^\nu$-relaxation. For more details on MGRIT (and thus \namealg on finer levels), see \cite{RDFalgout_etal_2014}. At the coarsest level, restriction and interpolation to and from the local coarse grids is done by injection, denoted by $R_I^{(\ell,p)}$ and $P_I^{(\ell,p)}$, respectively. Note that at the coarsest level, the residual is first transferred to the global coarse grid and then to the local coarse grids, allowing for a nicer notation of the algorithm. \namealg can also be used with other common MGRIT cycle types, such as F-cycles \cite{UTrottenberg_etal_2001a} or nested iterations \cite{GDahlquist_nes_it,LKronsjo_nes_it}. While $F$-cycles visit the coarsest level several times per iteration, nested iterations compute an improved initial guess by starting on the coarsest level and interpolating the solution to the finer levels, applying one $V$-cycle per level. For all cycle types, the standard MGRIT coarsest level can be replaced by local coarse grids.
Analogous to the two-level setting, \namealg is equivalent to MGRIT if $k=N_t^{(L-1)}+1$.
\begin{center}
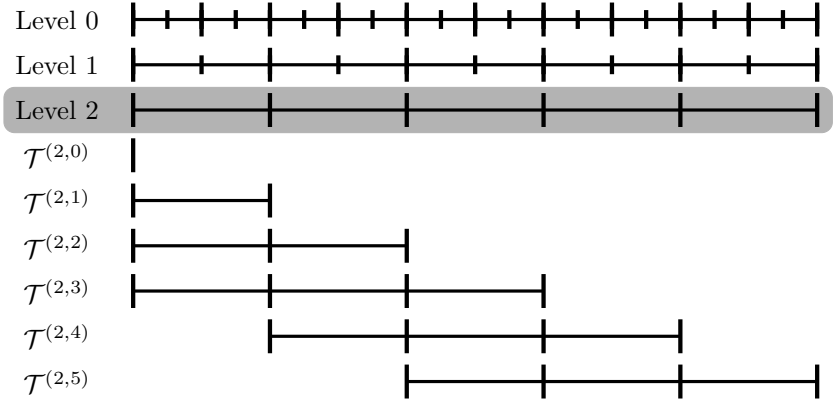
\begin{figure}
\centering

	\begin{tikzpicture}
	\def \j {0.45}
	\def \s {0}
	\def \m {2}
	\def \sl {-0.6}
	\node at (-1, \s) {Level 0}; 
	\draw[line width=1.2pt, -, >=latex'](0, \s) -- coordinate (x axis) (20*\j, \s) node[right] {}; 
	\foreach \x in {0,1,...,20} 
	{	
		\draw[ultra thick, black] (\x*\j, \s+0.125) -- (\x*\j, \s-0.125) node[below] {} ;
	}
	
	\foreach \x in {0,2,...,20} 
	{	
		\draw[ultra thick, black] (\j*\x, \s+0.225) -- (\j*\x, \s-0.225) node[below] {} ;
	}
	
	\def \s {1*\sl}
	\node at (-1, \s) {Level 1}; 
	\draw[line width=1.2pt, -, >=latex'](0, \s) -- coordinate (x axis) (20*\j, \s) node[right] {}; 
	\foreach \x in {0,2,...,20} 
	{	
		\draw[ultra thick, black] (\x*\j, \s+0.125) -- (\x*\j, \s-0.125) node[below] {} ;
	}
	
	\foreach \x in {0,4,...,20} 
	{	
		\draw[ultra thick, black] (\j*\x, \s+0.225) -- (\j*\x, \s-0.225) node[below] {} ;
	}

	\def \s {2*\sl}
	\draw[rounded corners,fill=gray!60,gray!60] (-1.7,\s-.3) rectangle (20*\j+.2,\s+.3);
	\node at (-1, \s) {Level 2}; 
	\draw[line width=1.2pt, -, >=latex'](0, \s) -- coordinate (x axis) (20*\j, \s) node[right] {};
	\foreach \x in {0,4,...,20} 
	{	
		\draw[ultra thick, black] (\j*\x, \s+0.225) -- (\j*\x, \s-0.225) node[below] {} ;
	}

	\def \s {3*\sl}
	\node at (-1,\s) {$\mathcal{T}^{(2,0)}$}; 
	\draw[line width=1.2pt, -, >=latex'](0,0+ \s) -- coordinate (x axis) (0*\j,0+ \s) node[right] {}; 
	\foreach \x in {0} 
		\draw[ultra thick, black] (\j*\m*\x,0.225+ \s) -- (\j*\m*\x,-0.225+ \s) node[below] {} ;

	\def \s {4*\sl}
	\node at (-1,\s) {$\mathcal{T}^{(2,1)}$}; 
	\draw[line width=1.2pt, -, >=latex'](0,0+ \s) -- coordinate (x axis) (4*\j,0+ \s) node[right] {}; 
	\foreach \x in {0,2} 
		\draw[ultra thick, black] (\j*\m*\x,0.225+ \s) -- (\m*\j*\x,-0.225+ \s) node[below] {} ;

	\def \s {5*\sl}
	\node at (-1,\s) {$\mathcal{T}^{(2,2)}$}; 
	\draw[line width=1.2pt, -, >=latex'](0*\j,0+ \s) -- coordinate (x axis) (8*\j,0+ \s) node[right] {}; 
	\foreach \x in {0,2,4} 
		\draw[ultra thick, black] (\j*\m*\x,0.225+ \s) -- (\m*\j*\x,-0.225+ \s) node[below] {} ;

	\def \s {6*\sl}
	\node at (-1,\s) {$\mathcal{T}^{(2,3)}$}; 
	\draw[line width=1.2pt, -, >=latex'](0*\j,0+ \s) -- coordinate (x axis) (12*\j,0+ \s) node[right] {}; 
	\foreach \x in {0,2,4,6} 
		\draw[ultra thick, black] (\j*\m*\x,0.225+ \s) -- (\m*\j*\x,-0.225+ \s) node[below] {} ;
		
	\def \s {7*\sl}
	\node at (-1,\s) {$\mathcal{T}^{(2,4)}$}; 
	\draw[line width=1.2pt, -, >=latex'](4*\j,0+ \s) -- coordinate (x axis) (16*\j,0+ \s) node[right] {}; 
	\foreach \x in {2,4,6,8} 
		\draw[ultra thick, black] (\j*\m*\x,0.225+ \s) -- (\m*\j*\x,-0.225+ \s) node[below] {} ;

	\def \s {8*\sl}
	\node at (-1,\s) {$\mathcal{T}^{(2,5)}$}; 
	\draw[line width=1.2pt, -, >=latex'](8*\j,0+ \s) -- coordinate (x axis) (20*\j,0+ \s) node[right] {}; 
	\foreach \x in {4,6,8,10} 
		\draw[ultra thick, black] (\j*\m*\x,0.225+ \s) -- (\m*\j*\x,-0.225+ \s) node[below] {} ;

	\end{tikzpicture}

	\caption{\label{fig:multilevel_time_grid}Example of a three-level time grid hierarchy for the \namealg algorithm for a fine grid with 21 time points, $m=2$ and $k=4$. At the coarsest level, a local coarse grid is generated for each $C$-point of the global coarse grid (gray box). These local grids $(\mathcal{T}^{(2,p)}, p = 0, \ldots, 5)$ replace the global coarse grid used in the classical MGRIT algorithm. 
}
\end{figure}\vspace{-3ex}
\end{center}

\begin{algorithm}
\caption{\label{alg:at_mgrit}\namealg FAS($\ell$)}
\begin{algorithmic}[1]
\STATE \textbf{repeat}
\STATE \qquad if $\ell$ is the coarsest level:
\STATE \qquad \qquad For $p=0$  to $N_t^{(\ell)}$:
\STATE \qquad \qquad \qquad Restrict to local grids
\STATE \qquad \qquad \qquad \qquad $\textbf{v}^{(\ell,p)} = R_I^{(\ell,p)}(\textbf{v}^{(\ell)})$
\STATE \qquad \qquad \qquad \qquad $\textbf{g}^{(\ell,p)} = R_I^{(\ell,p)}(\textbf{g}^{(\ell)})$
\STATE \qquad \qquad \qquad Solve local problem $\mathcal{A}^{(l,p)}(\textbf{u}^{(\ell,p)})= \mathcal{A}^{(\ell,p)}(\mathbf{v}^{(\ell,p)}) + \textbf{g}^{(\ell,p)}$
\STATE \qquad \qquad \qquad Update $\textbf{u}^{(\ell)} = P_I^{(\ell,p)}\textbf{u}^{(\ell,p)} $
\STATE \qquad else
\STATE \qquad \qquad Apply $F$-relaxation to $\mathcal{A}^{(\ell)}(\textbf{u}^{(\ell)})= \textbf{g}^{(\ell)}$
\STATE \qquad \qquad For $0$ to $\nu$:
\STATE \qquad \qquad \qquad Apply $CF$-relaxation to $\mathcal{A}^{(\ell)}(\textbf{u}^{(\ell)})= \textbf{g}^{(\ell)}$
\STATE \qquad \qquad Inject the approximation and its residual to the coarse grid
\STATE \qquad \qquad \qquad $\textbf{u}^{(l+1)}=R_{I}^{(\ell)}(\mathbf{u}^{(\ell)})$
\STATE \qquad \qquad \qquad $\textbf{v}^{(l+1)}=R_{I}^{(\ell)}(\mathbf{u}^{(\ell)})$
\STATE \qquad \qquad \qquad $\textbf{g}^{(l+1)}=R_{I}^{(\ell)}(\textbf{g}^{(\ell)}-\mathcal{A}^{(\ell)}\textbf{u}^{(\ell)})$
\STATE \qquad \qquad Solve on next level: \namealg($\ell+1$)
\STATE \qquad \qquad Compute the error approximation: $\mathbf{e}=\mathbf{u}^{(\ell+1)}-\mathbf{v}^{(\ell+1)}$
\STATE \qquad \qquad Correct using ideal interpolation: $\textbf{u}^{(\ell)} = \textbf{u}^{(\ell)} + P^{(\ell)}(\textbf{e})$ 
\STATE \textbf{until} stopping criterion is reached
\end{algorithmic}
\end{algorithm}

\section{Theory}\label{sec:theory}

This section develops convergence theory for \namealg in the linear two-level setting. The analysis is built on two-level MGRIT/Parareal theory developed in \cite{doi:10.1137/16M1074096,doi:10.1137/18M1226208}, and gives insight on the effects of truncating the coarse-grid time grid. We begin by introducing the error-propagation operator in the case of exact solves on a truncated coarse grid (\Cref{sec:theory:err:exact}) and inexact coarse-grid solves (\Cref{sec:theory:err:approx}). Formal two-level convergence bounds are provided in \Cref{sec:theory:conv}. Because we are in the two-level setting, we drop $\ell$ superscripts from \Cref{sec:algorithm2}.

\subsection{Error propagation}\label{sec:theory:err}

Following from \cite{RDFalgout_etal_2014}, the two-level error propagation operator for linear \namealg with an exact coarse-grid solve is given by:
\begin{align}\label{eq:E0}
\mathcal{E} & := \left(I - \sum_{p=0}^{N_T} P_S^{(p)}( A_c^{(p)})^{-1} R_I^{(p)} A \right)PR_I, 
\end{align} 
where $A_c^{(p)}$ represents the local coarse grid systems, $R_I^{(p)}$ is the restriction operator to the local coarse grids, and $P_S^{(p)}$ defines the interpolation from the local coarse grids that updates the fine grid using \emph{selective ideal interpolation}, i.\,e., for one specific {$C$-point, this $C$-point and the following interval of $F$-points are updated.} We see that \eqref{eq:E0} is analogous to that derived in \cite[Eq. 2.12]{RDFalgout_etal_2014}, but here we must sum over {$C$-points, as each $C$-point is updated by a unique local coarse-grid.}
The operators $P$ and $R_I$, corresponding to ideal interpolation and restriction by injection, respectively, are given by

\begin{center}
\begin{tikzpicture}[baseline={-0.5ex},mymatrixenv]

\matrix [mymatrix,inner sep=4pt,label={[]left:$P\coloneqq I \otimes$},label={[]right:$\;\;\;\; ,$}] (m) at  (-2.01,0)
{
	I \\
	\Phi   \\
	\vdots \\
	\Phi^{m-1} \\ 
};

\mymatrixbraceright{1}{4}{$\scriptscriptstyle m$}
        
\matrix [mymatrix,inner sep=4pt,label={[]left:$R_I\coloneqq$},label={[]right:$\;\;\;\; \; \;\; ,$}] (m)at  (-0,-3.2)  
{
	I & &&&&&&&& \\
	& \mathbf{0} & ... & \mathbf{0} & I & &&&& \\
	&   & &&&\ddots&&&& \\ 
	&&&&  &  & \mathbf{0} & ... & \mathbf{0} & I  \\
};

\mymatrixbracetop{2}{5}{$\scriptscriptstyle m$}
\mymatrixbraceright{1}{4}{$\scriptscriptstyle N_t^{(1)}$}

\matrix [mymatrix,inner sep=4pt,label={[]left:$P_S^{(p)}\coloneqq$},label={[]right:$\;\;\;\; .$}] (m) at  (6,-1.3)
{
	&&  \\
	&& \\ 
	&& I \\
	&& \Phi \\
	&& \vdots \\
	&& \Phi^{m-1} \\
	&&  \\
	&&  \\ 
};

\mymatrixbraceright{1}{2}{$\scriptscriptstyle pm$}
\mymatrixbraceright{3}{6}{$\scriptscriptstyle m$}
\mymatrixbracetop{1}{2}{$\scriptscriptstyle \min(p,k-1)$}

\end{tikzpicture}
\end{center}
Note, that the operator $PR_I$ is equivalent to error propagation for $F$-relaxation \cite{RDFalgout_etal_2014}. 

\subsubsection{Exact local coarse grid solve}\label{sec:theory:err:exact}

First, we consider the effect of the local coarse grids using exact solves on the coarse time steps. For this purpose, we define the local coarse-grid problem as
\begin{align*}
A_c^{(p)} &\coloneqq  R_I^{(p)} AP^{(p)},
\end{align*}
where $P^{(p)}$ and $R_I^{(p)}$ define the transfer between the fine grid and the local coarse grids and are submatrices of $P$ and $R_I$. For $P^{(p)}$, only columns of $P$ associated to points lying on this local coarse grid are considered. Equivalently, only the associated rows are considered for the restriction. Then, the coarse-grid problems are given by

\begin{center}\label{eq:RAP}
\begin{tikzpicture}[baseline={-0.5ex},mymatrixenv]

\matrix [mymatrix,inner sep=4pt,label={[]left:$R_I^{(p)}AP^{(p)}=$}] (m) at (6.3,0)
{
	I & & & & \\ -
	\Phi^{m} & I \\ 
	& -\Phi^{m} & I \\
	&& \ddots & \ddots \\ 
	&&& -\Phi^{m} & I \\
};

\mymatrixbraceright{1}{5}{$\scriptscriptstyle \min(p+1,k)$}
\mymatrixbracetop{1}{5}{$\scriptscriptstyle \min(p+1,k)$}
\end{tikzpicture}.
\end{center}
Here, it is important to note that all local coarse-grid systems $R_I^{(p)}AP^{(p)}$ have the same structure, but consider different time intervals. In fact, the exact local coarse-grid systems are principle submatrices of the Schur complement corresponding to a standard Parareal/MGRIT coarse-grid with exact solves \cite{RDFalgout_etal_2014}.

We can now examine the error-propagator $\mathcal{E}_{e}$ using the exact local coarse grids. We refer to \Cref{app:AppendixA} for detailed algebraic derivations. In forming $\mathcal{E}_{e}$ by summing over all $p=0,1,\hdots,N_T$, we obtain a block lower triangular matrix, whereby each $p$ updates $m$ rows of $\mathcal{E}$, and the error-propagator using ideal local coarse grids can be written in block form as
\begin{center}
\begin{equation}\label{eq:error_exact}
\begin{tikzpicture}[baseline={-0.5ex},mymatrixenv]

\matrix [mymatrix,inner sep=4pt,label={[]left:$\mathcal{E}_e =$},label={[]right:$\;\;\;\; ,$}] (m)
{
	\mathbf{0}  &\hdots     &            &            &        &\mathbf{0} \\ 
	\vdots      &\hdots     &            &            &        &\vdots \\
	\mathbf{0}  &\hdots     &            &            &        &\mathbf{0}  \\
	\mathcal{G} &\mathbf{0} &\hdots      &            &        &\mathbf{0} \\ 	
	\mathbf{0}  &\ddots     &\ddots	     &            &        &\vdots \\
	\mathbf{0}  &\mathbf{0} &\mathcal{G} & \mathbf{0} & \hdots & \mathbf{0} \\
};

\mymatrixbraceright{1}{3}{$\scriptscriptstyle km$}
\mymatrixbraceright{4}{4}{$\scriptscriptstyle m$}
\mymatrixbracetop{4}{6}{$\scriptscriptstyle km$}
\mymatrixbracetop{1}{1}{$\scriptscriptstyle m$}

\matrix [mymatrix,inner sep=4pt,label={[]left:$\mathcal{G}=$},label={[]right:$\;\;\;\; .$}] (m)at  (6,0)  
{
\Phi^{km} & \mathbf{0} & \hdots & \mathbf{0} \\
\Phi^{km+1} & \mathbf{0} & \hdots & \mathbf{0} \\
\vdots & \vdots & \vdots & \vdots \\
\Phi^{km+m-1} & \mathbf{0} & \hdots & \mathbf{0} \\
};

\mymatrixbracetop{1}{4}{$\scriptscriptstyle m$}
\mymatrixbraceright{1}{4}{$\scriptscriptstyle m$}

\end{tikzpicture}
\end{equation}
\end{center}
Note that the error propagator $\mathcal{E}_e$ is nonzero, so unlike Parareal/two-level MGRIT, \namealg using exact local coarse-grid inverses is not a direct method. For all $p>k$, we have some error perturbation that results from truncating the exact (Schur-complement) coarse grid.

\subsubsection{Approximate local coarse grid solve}\label{sec:theory:err:approx}

As typical for multigrid reduction techniques, we do not invert $R_I^{(p)}AP^{(p)}$ exactly, but approximate $R_I^{(p)} AP^{(p)}\approx\widetilde{A}_c^{(p)}$. Specifically, we approximate the powers $\Phi^{m}$, which correspond to $m$ applications of the fine time integrator $\Phi$, with a coarse operator $\Psi$. This results in the approximation $\widetilde{A}_c^{(p)}$ given by
\begin{center}
\begin{tikzpicture}[baseline={-0.5ex},mymatrixenv]

\matrix [mymatrix,inner sep=4pt,label={[]left:$\widetilde{A_c}^{(p)} \coloneqq$}] (m)
{
	I&&& \\
 	-\Psi & I&& \\
	& \ddots & \ddots \\
	&& -\Psi & I \\
};

\mymatrixbraceright{1}{4}{$\scriptscriptstyle \min(p+1,k)$}
\mymatrixbracetop{1}{4}{$\scriptscriptstyle \min(p+1,k)$}

\end{tikzpicture}.
\end{center}
Using this approximation, we can formulate the error-propagation $\mathcal{E}_a$ using the approximated local coarse-grid inverse. Again, we refer to \Cref{app:AppendixB} for derivations. The error-propagation operator with approximate coarse grid, $\mathcal{E}_a$, is then given by
\begin{center}
\begin{equation}\label{eq:error_a}
\begin{tikzpicture}[baseline={-0.5ex},mymatrixenv]

\matrix [mymatrix,inner sep=4pt,label={[]left:$\mathcal{E}_a =$},label={[]right:$\; \; \; \; ,$}] (m)
{
  \mathbf{0} & \hdots & &  & & &  \mathbf{0} \\
  \mathcal{Z}_0 & \mathbf{0} & \hdots & & &  &  \mathbf{0} \\
\vdots & \ddots & \ddots & \hdots & &  &  \mathbf{0} \\
\mathcal{Z}_{k-2} & \hdots & \mathcal{Z}_0 & \mathbf{0} & \hdots & &   \mathbf{0} \\
\mathcal{W} & \mathcal{Z}_{k-2} & \hdots & \mathcal{Z}_0 & \mathbf{0} & \hdots &   \mathbf{0} \\
\mathbf{0} & \ddots & \ddots & \vdots & \ddots  & \ddots &  \vdots \\
\mathbf{0} &  \mathbf{0}  & \mathcal{W} & \mathcal{Z}_{k-2} & \hdots & \mathcal{Z}_0 &  \mathbf{0} \\
};

\mymatrixbraceright{1}{4}{$\scriptscriptstyle km$}
\mymatrixbraceright{5}{7}{$\scriptscriptstyle (P-k)m$}
\mymatrixbracetop{1}{3}{$\scriptscriptstyle (P-k)m$}
\mymatrixbracetop{4}{7}{$\scriptscriptstyle km$}

\end{tikzpicture}
\end{equation}
\end{center}
with block matrices $\mathcal{Z}_x$ and $\mathcal{W}$ given by
\begin{center}
\begin{equation}\label{eq:error_inexact_z_w}
\begin{tikzpicture}[baseline, mymatrixenv]

\matrix [mymatrix,inner sep=4pt,label={[]left:$\mathcal{Z}_x =$},label={[]right:,}] (m)
{
	\Phi^{0}\Psi^{x}(\Phi^{m}-\Psi) & \mathbf{0} & \hdots & \mathbf{0} \\
	\vdots & \vdots & \vdots & \vdots\\
	\Phi^{m-1}\Psi^{x}(\Phi^{m}-\Psi) & \mathbf{0} & \hdots & \mathbf{0} \\
};

\mymatrixbracetop{1}{4}{$\scriptscriptstyle m$}

\matrix [mymatrix,inner sep=4pt,label={[]left:$\mathcal{W} =$},label={[]right:.}] (m)at  (5.8,0)  
{
	\Phi^{0}\Psi^{k-1}\Phi^{m} & \mathbf{0} & \hdots & \mathbf{0} \\
	\vdots & \vdots & \vdots & \vdots\\
	\Phi^{m-1}\Psi^{k-1}\Phi^{m} & \mathbf{0} & \hdots & \mathbf{0} \\
};

\mymatrixbracetop{1}{4}{$\scriptscriptstyle m$}

\end{tikzpicture}
\end{equation}
\end{center}

\subsection{Convergence bounds}\label{sec:theory:conv}

To avoid multiple subscripts, let $\mathcal{E} \mapsto \mathcal{E}_a$
from \eqref{eq:error_a}. Using $f$ and $c$ subscripts to denote $F$- and $C$-points, respectively, $\mathcal{E}$ \eqref{eq:error_a} can be partitioned into $2\times 2$ block form. If we additionally consider powers of the matrix, which correspond to several iterations, we get
\begin{align*}
\mathcal{E}^\ell \coloneqq \begin{bmatrix} \mathcal{E}_{ff} & \mathcal{E}_{fc} \\
	\mathcal{E}_{cf} & \mathcal{E}_{cc}\end{bmatrix}^\ell 
= \begin{bmatrix} \mathbf{0}& \mathcal{E}_{fc} \\
	\mathbf{0} & \mathcal{E}_{cc}\end{bmatrix}^\ell
= \begin{bmatrix} \mathbf{0}& \mathcal{E}_{fc}\mathcal{E}_{cc}^{\ell-1} \\
	\mathbf{0} & \mathcal{E}_{cc}^\ell \end{bmatrix}.
\end{align*}
It follows from above that for multiple iterations, convergence is fully
defined by $\mathcal{E}_{cc}$, that is, $\mathcal{E}^\ell$ will be convergent
in some norm or measure if and only if $\mathcal{E}_{cc}^\ell$ is as well. To
that end, we consider analyzing the C-C principle submatrix of
\eqref{eq:error_a},
{\small
\begin{align}\label{eq:Ecc}
&\hspace{-85ex}\mathcal{E}_{cc} = \\\nonumber
\begin{bmatrix} 
  \mathbf{0} & & &  & & &  \\
  (\Phi^{m}-\Psi) & \mathbf{0} & & & & &  \\
\vdots & (\Phi^{m}-\Psi) & \ddots & & & &  \\
\Psi^{k-2}(\Phi^{m}-\Psi) & \vdots & \ddots & \mathbf{0} & \\
\Psi^{k-1}\Phi^{m} & \Psi^{k-2}(\Phi^{m}-\Psi) & \hdots & 
	(\Phi^{m}-\Psi) & \mathbf{0} & &  \\
& \ddots & \ddots  & \vdots & \ddots & \ddots \\
 	&  &\Psi^{k-1}\Phi^{m} & \Psi^{k-2}(\Phi^{m}-\Psi) &
	\hdots & (\Phi^{m}-\Psi) &  \mathbf{0}
\end{bmatrix}.&
\end{align}
}
Now consider the case of $\Phi$ and $\Psi$ being simultaneously diagonalizable, as
would occur if the same (diagonalizable) spatial matrix is used on the fine and
coarse grid. Let $U$ denote the shared eigenvector matrix of $\Phi$ and $\Psi$,
with eigenvalues $\mu\in\sigma(\Psi)$ and $\lambda\in\sigma(\Phi)$, where 
$\sigma(\Psi)$ and $\sigma(\Phi)$ denote the spectrum of $\Psi$ and $\Phi$,
respectively. Following the frameworks developed in \cite{doi:10.1137/16M1074096,
doi:10.1137/18M1226208}, let $\widetilde{U}$ denote a block-diagonal matrix,
with diagonal blocks given by eigenvectors $U$. Then, 
\begin{equation*}
\|\mathcal{E}_{cc}\|_{(\widetilde{U}\widetilde{U}^*)^{-1}} =
\max_{\{\mu,\lambda\}} \|\widetilde{\mathcal{E}}_{cc}\|,
\end{equation*}
where $\widetilde{\mathcal{E}}_{cc}$ is defined as follows for a fixed
pair of eigenvalues $\{\mu,\lambda\}$:
\begin{align*}
&\widetilde{\mathcal{E}}_{cc} \coloneqq\\
&\begin{bmatrix} 
  \mathbf{0} & & &  & & &  \\
  (\lambda^{m}-\mu) & \mathbf{0} & & & & &  \\
\vdots & (\lambda^{m}-\mu) & \ddots & & & &  \\
\mu^{k-2}(\lambda^{m}-\mu) & \vdots & \ddots & \mathbf{0} & \\
\mu^{k-1}\lambda^{m} & \mu^{k-2}(\lambda^{m}-\mu) & \hdots & 
	(\lambda^{m}-\mu) & \mathbf{0} & &  \\
& \ddots & \ddots  & \vdots & \ddots & \ddots \\
 	&  &\mu^{k-1}\lambda^{m} & \mu^{k-2}(\lambda^{m}-\mu) &
	\hdots & (\lambda^{m}-\mu) &  \mathbf{0} \\
\end{bmatrix}.\nonumber
\end{align*}

If the spatial matrix is normal, then $(\widetilde{U}\widetilde{U}^*)^{-1} = I$.
In general, bounding $\widetilde{\mathcal{E}}_{cc}$ in the $\ell^2$-norm 
for each eigenvalue pair guarantees convergence of $\mathcal{E}_{cc}$
in a certain eigenvector induced norm.

Recall the inequality $\|\widetilde{\mathcal{E}}_{cc}\|^2 \leq
	\|\widetilde{\mathcal{E}}_{cc}\|_1\|\widetilde{\mathcal{E}}_{cc}\|_\infty$. 
Given that $\widetilde{\mathcal{E}}_{cc}$ is Toeplitz, the maximum row and
column sums are equal, yielding the bound

\begin{align*}
\|\widetilde{\mathcal{E}}_{cc}\| \leq 
	\|\widetilde{\mathcal{E}}_{cc}\|_1 
& = |\lambda^{m} - \mu | \sum_{\ell=0}^{k-2} |\mu^\ell| + |\lambda^{m} \mu^{k-1}| \\
& \hspace{-5ex} = \frac{|\lambda^{m}-\mu |(1 - |\mu|^{k-1})}{1 - |\mu|} + |\lambda^{m} \mu^{k-1}|.
\end{align*}
Results are summarized in the following theorem.

\begin{theorem}[Two-level convergence]
Let $\Phi$ and $\Psi$ be simultaneously diagonalizable with eigenvectors $U$,
and consider two-level \namealg with coarsening factor $m$ and local
coarse-grid size $k$. For a given CF-splitting of time points, error
propagation takes the form

\begin{align*}
\mathcal{E}^\ell  \coloneqq \begin{bmatrix} \mathcal{E}_{ff} & \mathcal{E}_{fc} \\
	\mathcal{E}_{cf} & \mathcal{E}_{cc}\end{bmatrix}^\ell 
= \begin{bmatrix} \mathbf{0}& \mathcal{E}_{fc}\mathcal{E}_{cc}^{\ell-1} \\
	\mathbf{0} & \mathcal{E}_{cc}^\ell \end{bmatrix}.
\end{align*}
Further, assume $\Phi$ and $\Psi$ are stable in an eigenvalue
sense, that is, $|\mu|,|\lambda|<1$ for all $\mu\in\sigma(\Psi)$,
$\lambda\in\sigma(\Phi)$. Then, for eigenvalue pairs (with shared
eigenvector) $\{\mu,\lambda\}$,
\begin{align}\label{eq:Ecc_bound}
\|\mathcal{E}_{cc}\|_{(\widetilde{U}\widetilde{U}^*)^{-1}} \leq
\max_{\{\mu,\lambda\}} \left( \frac{|\lambda^{m}-\mu |(1 - |\mu|^{k-1})}
	{1 - |\mu|} + |\lambda^{m} \mu^{k-1}| \right).
\end{align}

\end{theorem}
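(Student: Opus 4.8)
The plan is to reduce the whole statement to the scalar, per-eigenvalue analysis of the single block $\mathcal{E}_{cc}$ and then estimate that block by a column sum. First I would establish the $2\times 2$ block power identity. Ordering the fine unknowns so that all $F$-points precede all $C$-points, I read off from the explicit form of $\mathcal{E}_a$ in \eqref{eq:error_a}--\eqref{eq:error_inexact_z_w} that within each $m\times m$ block $\mathcal{Z}_x$ and $\mathcal{W}$ only the first, i.e.\ $C$-point, column is nonzero; equivalently, $\mathcal{E}$ retains the rightmost factor $PR_I$ and $R_I$ injects only at $C$-points, so every column of $\mathcal{E}$ associated with an $F$-point vanishes. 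Hence $\mathcal{E}_{ff}=\mathbf{0}$ and $\mathcal{E}_{cf}=\mathbf{0}$, and a one-line induction on $\ell$ applied to the block-upper-triangular matrix $\bigl[\begin{smallmatrix}\mathbf{0}&\mathcal{E}_{fc}\\ \mathbf{0}&\mathcal{E}_{cc}\end{smallmatrix}\bigr]$ yields the stated form of $\mathcal{E}^\ell$. This confirms that asymptotic convergence is governed entirely by $\mathcal{E}_{cc}$, whose explicit banded lower-triangular Toeplitz form is \eqref{eq:Ecc}.

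Next I would carry out the simultaneous diagonalization. Each block entry of $\mathcal{E}_{cc}$ is a fixed polynomial in $\Phi$ and $\Psi$ (namely $\Phi^m-\Psi$, its left multiples $\Psi^x(\Phi^m-\Psi)$, and $\Psi^{k-1}\Phi^m$), so conjugating the block matrix by $\widetilde U=\diag(U,\dots,U)$ replaces every such block by the diagonal matrix of the corresponding scalar polynomials in $\lambda\in\sigma(\Phi)$ and $\mu\in\sigma(\Psi)$. After a permutation that groups together the coordinates belonging to a common eigenvector, $\widetilde U^{-1}\mathcal{E}_{cc}\widetilde U$ becomes block diagonal, with one diagonal block $\widetilde{\mathcal{E}}_{cc}$ per eigenvalue pair $\{\mu,\lambda\}$. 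The weighted norm $\|\cdot\|_{(\widetilde U\widetilde U^*)^{-1}}$ is precisely the norm in which this similarity is an isometry onto $\ell^2$, so $\|\mathcal{E}_{cc}\|_{(\widetilde U\widetilde U^*)^{-1}}=\max_{\{\mu,\lambda\}}\|\widetilde{\mathcal{E}}_{cc}\|$, and when the spatial operator is normal $(\widetilde U\widetilde U^*)^{-1}=I$ so this is the ordinary $\ell^2$ operator norm.

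It then remains to bound the scalar $k$-banded lower-triangular Toeplitz matrix $\widetilde{\mathcal{E}}_{cc}$. I would use $\|\widetilde{\mathcal{E}}_{cc}\|^2\le\|\widetilde{\mathcal{E}}_{cc}\|_1\|\widetilde{\mathcal{E}}_{cc}\|_\infty$; because the matrix is Toeplitz its maximal absolute row and column sums coincide, so $\|\widetilde{\mathcal{E}}_{cc}\|\le\|\widetilde{\mathcal{E}}_{cc}\|_1$. The first column carries the full band, with entries $\mu^{j}(\lambda^m-\mu)$ for $j=0,\dots,k-2$ together with the single tail entry $\mu^{k-1}\lambda^m$, and it attains the maximal column sum. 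Summing the finite geometric series $\sum_{j=0}^{k-2}|\mu|^j=(1-|\mu|^{k-1})/(1-|\mu|)$ --- where the stability hypothesis $|\mu|<1$ both justifies the closed form and keeps the denominator well-defined --- gives exactly the bracketed quantity in \eqref{eq:Ecc_bound}, and taking the maximum over eigenvalue pairs completes the argument.

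I expect the only genuinely delicate step to be the norm-equivalence claim $\|\mathcal{E}_{cc}\|_{(\widetilde U\widetilde U^*)^{-1}}=\max_{\{\mu,\lambda\}}\|\widetilde{\mathcal{E}}_{cc}\|$: one must be careful that $\widetilde U$ is only invertible (not unitary) in the non-normal case, so the correct statement concerns the eigenvector-induced norm rather than the Euclidean one, and that the permutation truly block-diagonalizes across distinct eigenvalue pairs. This is exactly the point handled by the frameworks of \cite{doi:10.1137/16M1074096,doi:10.1137/18M1226208}, which I would invoke verbatim; everything else --- the block-triangular power identity and the Toeplitz column-sum estimate --- is routine.
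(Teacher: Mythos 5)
Your proposal is correct and follows essentially the same route as the paper: the block-triangular structure of $\mathcal{E}_a$ (zero $F$-point columns) giving the power identity, the eigenvector-induced-norm reduction to the scalar Toeplitz matrix $\widetilde{\mathcal{E}}_{cc}$ via the cited frameworks, and the $\|\cdot\|_1$/$\|\cdot\|_\infty$ bound with the geometric-series evaluation of the first column sum. The only difference is that you spell out details (the induction on $\ell$, the permutation that block-diagonalizes per eigenvalue pair, and the identification of the maximal column) that the paper leaves implicit in its preceding discussion.
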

\begin{proof}
The proof follows from the above discussion.
\end{proof}

Note that the first term of equation \eqref{eq:Ecc_bound} resembles (necessary and sufficient \cite{doi:10.1137/18M1226208}) convergence bounds for two-level Parareal and MGRIT \cite{doi:10.1137/16M1074096,doi:10.1137/18M1226208}, while the second term introduces an error perturbation that results from truncating the coarse grid. We now make three observations suggesting the additional error perturbation over standard Parareal/MGRIT will not significantly degrade convergence, which are verified numerically in \Cref{sec:numerical_experiments,sec:parallel_experiments}.
\begin{enumerate}
	\item For $|\mu|\ll 1$ and $|\lambda| < 1$, for quite reasonable $m$ and $k$, we will have the error term $|\lambda^{m} \mu^{k-1}|\ll 1$ (even if, say, $|\lambda|$ is close to one).

	\item For $|\mu|$ close to one, recall that two-level convergence of MGRIT/Parareal requires
	$|\mu - \lambda^{m}| \leq \varphi (1 - |\mu|)$, with constant $\varphi$ \cite{doi:10.1137/18M1226208}. Thus, a
	successful Parareal/MGRIT scheme requires $|\mu - \lambda^{m}| \approx 0$, in which
	case $|\lambda^{m} \mu^{k-1}| \approx |\lambda|^{km}$. Because $|\lambda| < 1$ by
	assumption, the truncated coarse-grid error term still rapidly becomes small for moderate
	$k$ and $m$. For example let $|\lambda| = |\mu| = 0.99$, $k = 0.1P$ for $P$
	processes (see \Cref{sec:heat_equation}), and $m = (N_t+1)/P$. Then, for 5,000
	time steps, we have $|\mu^{k-1}\lambda^{m}|\lessapprox 0.99^{500} = 0.0065...$  

	\item Finally, regardless of the exact size of the additional error term, it does not have significant impact on long-term convergence behavior. Multiplication of Toeplitz matrices such as $\widetilde{\mathcal{E}}_{cc}^\ell$ corresponds to finite discrete convolutions. With some algebra, one can show that the ``error'' subdiagonal, that is, the subdiagonal of $\widetilde{\mathcal{E}}_{cc}$ that lacks a $\mu-\lambda^{m}$ scaling, is propagated out of the matrix after $P/k$ iterations 
	(i.e., all matrix entries then have at least one power of $\mu-\lambda^{m}$).
	As a result, if we really do have divergent eigenmodes $\{\mu,\lambda\}$ for which
	the bound in equation \eqref{eq:Ecc_bound} $> 1$, then after a small number of iterations
	(e.g., 5-10 for $k/P\sim 0.1-0.2$) such non-convergent modes will begin converging,
	indeed \emph{before} the initial condition has propagated across all processes.

\end{enumerate}

\section{Algorithmic properties}\label{sec:numerical_experiments}

This section examines nuances of the \namealg algorithm, including a communication scheme for distributing residuals on the coarsest level (\Cref{sec:implementation}), the implicit propagation of initial conditions across the time domain (\Cref{sec:propagation}), and a study on the new parameter $k$ (\Cref{sec:heat_equation}).

\subsection{Implementation}\label{sec:implementation}

We have implemented \Cref{alg:at_mgrit} in parallel as part of the \texttt{PyMGRIT} package \cite{PyMGRIT,Bolten_etal_2020} framework. When applying the algorithm in parallel, we assume that at the coarsest level at most one $C$-point of the global grid lies on one process. In principle this is not necessary, but ensures that the solves of the local problems can be perfectly parallelized. To solve the local problems, the fine(r)-level residuals must be distributed. Let $p_0,p_1,...,p_{N_T}$, where $N_T$ is equal to the number of points on the coarsest global coarse grid, be all processes containing a $C$-point on the coarsest grid. Then, we define two groups of MPI communicators. The first decomposition divides all processes based on the distance $k$ into $N_T/k$ groups, where the first group consists of processes $p_0,p_1,...,p_{k-1}$, the second consists of processes $p_k,p_{k+1},...,p_{2k-1}$, and so on. The second decomposition divides the processes $p_{k-1},...,p_{N_T}$ into groups of size $k$, so that the first group consists of processes $p_{k-1},...,p_{2k-2}$, the next of processes $p_{2k-1},...,p_{3k-1}$, and so on. Then, the distribution of residuals is given by a communication within all groups of the first decomposition, followed by a communication within all groups of the second decomposition. Note that the groups of a decomposition do not overlap, allowing for parallel communication within each group. \Cref{fig:communication} shows an example of the residual communication for a two-level \namealg algorithm with a global coarse grid with $N_T=6$ time points, along with a description of the communication stages.

\begin{figure}[bht!]
\centering
\begin{adjustbox}{width=\textwidth}
  \begin{tikzpicture}[y=1cm, x=1cm, thick, font=\footnotesize]    

\node (b) at (-1,0) {Fine};
\def\w{0.2}
\draw[line width=1.2pt, -, >=latex'](0,0) -- coordinate (x axis) (\w*17,0) node[right] {}; 
\foreach \x in {0,1,2,...,17} \draw[ultra thick, black] (\w*\x,0.125) -- (\w*\x,-0.125) node[below] {} ;
\foreach \x in {0,3,6,9,12,15} \draw[ultra thick, black] (\w*\x,0.175) -- (\w*\x,-0.175) node[above] {} ;

\def\y{-0.5}
\node (b) at (-1,\y) {Proc 0};
\draw[line width=1.2pt, -, >=latex'](0*\w,\y) -- coordinate (x axis) (0,\y) node[right] {}; 
\foreach \x in {0} \draw[ultra thick, black] (\w*\x,\y+0.175) -- (0.3*\x,\y-0.175) node[above] {} ;

\def\y{-1}
\node (b) at (-1,\y) {Proc 1};
\draw[line width=1.2pt, -, >=latex'](0*\w,\y) -- coordinate (x axis) (3*\w,\y) node[right] {}; 
\foreach \x in {3} \draw[ultra thick, black] (\w*\x,\y+0.175) -- (\w*\x,\y-0.175) node[above] {} ;

\def\y{-1.5}
\node (b) at (-1,\y) {Proc 2};
\draw[line width=1.2pt, -, >=latex'](0*\w,\y) -- coordinate (x axis) (6*\w,\y) node[right] {}; 
\foreach \x in {6} \draw[ultra thick, black] (\w*\x,\y+0.175) -- (\w*\x,\y-0.175) node[above] {} ;

\def\y{-2}
\node (b) at (-1,\y) {Proc 3};
\draw[line width=1.2pt, -, >=latex'](3*\w,\y) -- coordinate (x axis) (\w*9,\y) node[right] {}; 
\foreach \x in {9} \draw[ultra thick, black] (\w*\x,\y+0.175) -- (\w*\x,\y-0.175) node[above] {} ;

\def\y{-2.5}
\node (b) at (-1,\y) {Proc 4};
\draw[line width=1.2pt, -, >=latex'](6*\w,\y) -- coordinate (x axis) (12*\w,\y) node[right] {}; 
\foreach \x in {12} \draw[ultra thick, black] (\w*\x,\y+0.175) -- (\w*\x,\y-0.175) node[above] {} ;

\def\y{-3}
\node (b) at (-1,\y) {Proc 5};
\draw[line width=1.2pt, -, >=latex'](9*\w,\y) -- coordinate (x axis) (15*\w,\y) node[right] {}; 
\foreach \x in {15} \draw[ultra thick, black] (\w*\x,\y+0.175) -- (\w*\x,\y-0.175) node[above] {} ;

\def\w{0.2}
\def\xs{4}
\draw[line width=1.2pt, -, >=latex'](\xs+0,0) -- coordinate (x axis) (\xs+\w*17,0) node[right] {}; 
\foreach \x in {0,1,2,...,17} \draw[ultra thick, black] (\xs+\w*\x,0.125) -- (\xs+\w*\x,-0.125) node[below] {} ;
\foreach \x in {0,3,6,9,12,15} \draw[ultra thick, black] (\xs+\w*\x,0.175) -- (\xs+\w*\x,-0.175) node[above] {} ;

\def\y{-0.5}
\draw[fill=gray!40] (\xs-.2,-.25) rectangle (\xs+1.4,-1.75);
\draw[fill=gray!40] (\xs+1.6,-1.75) rectangle (\xs+3.2,-3.25);  

\draw[line width=1.2pt, -, >=latex'](\xs+0*\w,\y) -- coordinate (x axis) (\xs+0,\y) node[right] {}; 
\foreach \x in {0} \draw[ultra thick, black] (\xs+\w*\x,\y+0.175) -- (\xs+0.3*\x,\y-0.175) node[above] {} ;

\def\y{-1}

\draw[line width=1.2pt, -, >=latex'](\xs+0*\w,\y) -- coordinate (x axis) (\xs+3*\w,\y) node[right] {}; 
\foreach \x in {3} \draw[ultra thick, black] (\xs+\w*\x,\y+0.175) -- (\xs+\w*\x,\y-0.175) node[above] {} ;
\foreach \x in {0} \draw[ultra thick, black] (\xs+\w*\x,\y+0.175) -- (\xs+\w*\x,\y-0.175) node[above] {} ;

\def\y{-1.5}

\draw[line width=1.2pt, -, >=latex'](\xs+0*\w,\y) -- coordinate (x axis) (\xs+6*\w,\y) node[right] {}; 
\foreach \x in {6} \draw[ultra thick, black] (\xs+\w*\x,\y+0.175) -- (\xs+\w*\x,\y-0.175) node[above] {} ;
\foreach \x in {0,3} \draw[ultra thick, black] (\xs+\w*\x,\y+0.175) -- (\xs+\w*\x,\y-0.175) node[above] {} ;

\def\y{-2}

\draw[line width=1.2pt, -, >=latex'](\xs+3*\w,\y) -- coordinate (x axis) (\xs+\w*9,\y) node[right] {}; 
\foreach \x in {9} \draw[ultra thick, black] (\xs+\w*\x,\y+0.175) -- (\xs+\w*\x,\y-0.175) node[above] {} ;

\def\y{-2.5}

\draw[line width=1.2pt, -, >=latex'](\xs+6*\w,\y) -- coordinate (x axis) (\xs+12*\w,\y) node[right] {}; 
\foreach \x in {12} \draw[ultra thick, black] (\xs+\w*\x,\y+0.175) -- (\xs+\w*\x,\y-0.175) node[above] {} ;
\foreach \x in {9} \draw[ultra thick, black] (\xs+\w*\x,\y+0.175) -- (\xs+\w*\x,\y-0.175) node[above] {} ;

\def\y{-3}

\draw[line width=1.2pt, -, >=latex'](\xs+9*\w,\y) -- coordinate (x axis) (\xs+15*\w,\y) node[right] {}; 
\foreach \x in {15} \draw[ultra thick, black] (\xs+\w*\x,\y+0.175) -- (\xs+\w*\x,\y-0.175) node[above] {} ;
\foreach \x in {9,12} \draw[ultra thick, black] (\xs+\w*\x,\y+0.175) -- (\xs+\w*\x,\y-0.175) node[above] {} ;

\def\w{0.2}
\def\xs{8.0}
\draw[line width=1.2pt, -, >=latex'](\xs+0,0) -- coordinate (x axis) (\xs+\w*17,0) node[right] {}; 
\foreach \x in {0,1,2,...,17} \draw[ultra thick, black] (\xs+\w*\x,0.125) -- (\xs+\w*\x,-0.125) node[below] {} ;
\foreach \x in {0,3,6,9,12,15} \draw[ultra thick, black] (\xs+\w*\x,0.175) -- (\xs+\w*\x,-0.175) node[above] {} ;

\def\y{-0.5}
\draw[pattern=north east lines] (\xs+-.2,-1.25) rectangle (\xs+1.4,-2.75);

\draw[line width=1.2pt, -, >=latex'](\xs+0*\w,\y) -- coordinate (x axis) (\xs+0,\y) node[right] {}; 
\foreach \x in {0} \draw[ultra thick, black] (\xs+\w*\x,\y+0.175) -- (\xs+0.3*\x,\y-0.175) node[above] {} ;

\def\y{-1}

\draw[line width=1.2pt, -, >=latex'](\xs+0*\w,\y) -- coordinate (x axis) (\xs+3*\w,\y) node[right] {}; 
\foreach \x in {3} \draw[ultra thick, black] (\xs+\w*\x,\y+0.175) -- (\xs+\w*\x,\y-0.175) node[above] {} ;
\foreach \x in {0} \draw[ultra thick, black] (\xs+\w*\x,\y+0.175) -- (\xs+\w*\x,\y-0.175) node[above] {} ;

\def\y{-1.5}

\draw[line width=1.2pt, -, >=latex'](\xs+0*\w,\y) -- coordinate (x axis) (\xs+6*\w,\y) node[right] {}; 
\foreach \x in {6} \draw[ultra thick, black] (\xs+\w*\x,\y+0.175) -- (\xs+\w*\x,\y-0.175) node[above] {} ;
\foreach \x in {0,3} \draw[ultra thick, black] (\xs+\w*\x,\y+0.175) -- (\xs+\w*\x,\y-0.175) node[above] {} ;

\def\y{-2}

\draw[line width=1.2pt, -, >=latex'](\xs+3*\w,\y) -- coordinate (x axis) (\xs+\w*9,\y) node[right] {}; 
\foreach \x in {9} \draw[ultra thick, black] (\xs+\w*\x,\y+0.175) -- (\xs+\w*\x,\y-0.175) node[above] {} ;
\foreach \x in {6,3} \draw[ultra thick, black] (\xs+\w*\x,\y+0.175) -- (\xs+\w*\x,\y-0.175) node[above] {} ;

\def\y{-2.5}

\draw[line width=1.2pt, -, >=latex'](\xs + 6*\w,\y) -- coordinate (x axis) (\xs+12*\w,\y) node[right] {}; 
\foreach \x in {12} \draw[ultra thick, black] (\xs+\w*\x,\y+0.175) -- (\xs+\w*\x,\y-0.175) node[above] {} ;
\foreach \x in {9} \draw[ultra thick, black] (\xs+\w*\x,\y+0.175) -- (\xs+\w*\x,\y-0.175) node[above] {} ;
\foreach \x in {6} \draw[ultra thick, black] (\xs+\w*\x,\y+0.175) -- (\xs+\w*\x,\y-0.175) node[above] {} ;

\def\y{-3}

\draw[line width=1.2pt, -, >=latex'](\xs+9*\w,\y) -- coordinate (x axis) (\xs+15*\w,\y) node[right] {}; 
\foreach \x in {15} \draw[ultra thick, black] (\xs+\w*\x,\y+0.175) -- (\xs+\w*\x,\y-0.175) node[above] {} ;
\foreach \x in {9,12} \draw[ultra thick, black] (\xs+\w*\x,\y+0.175) -- (\xs+\w*\x,\y-0.175) node[above] {} ;
\end{tikzpicture}
\end{adjustbox}

	\caption{\label{fig:communication}Illustration of the communication scheme for $P=6$ processes and distance $k=3$. First (left), each process computes the residual of its $C$-point. In a second step (middle), the residuals are distributed in parallel within the groups of the first decomposition (gray boxes). Last (right), the residuals are distributed within the groups of the second decomposition (shaded boxes), after which each process has all the required residuals.}
\end{figure}
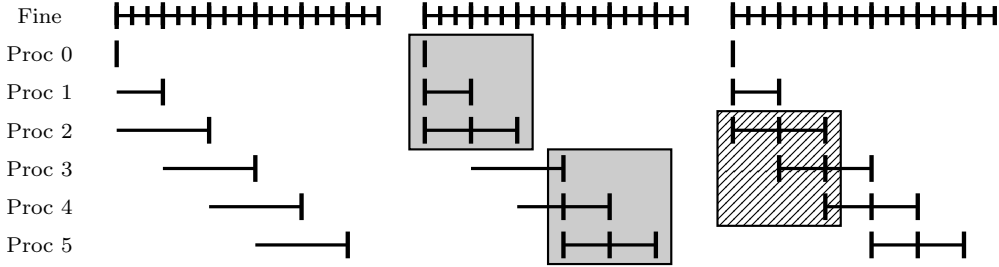

\subsection{Propagation of the initial condition}\label{sec:propagation}

A key feature of the \namealg algorithm is the implicit propagation of the initial condition through the iterations of the method, which allows for using local coarse grids that do not include the initial time point. The idea of implicit propagation of the initial condition is best explained in a two-level example with $F$-relaxation. \Cref{fig:implicit_propagation} shows an example of the distribution of local coarse grids for $N_t=21$, $m=7$ and $k=3$. Only the first three local coarse grids have direct access to the initial condition and, thus, the initial condition is distributed over the first three local coarse grids in the first iteration. All other local coarse grids do not have access to the initial condition at this point. In the second iteration, again only the first three local coarse grids directly contain the initial condition. However, the next two local coarse grids now contain $C$-points of local coarse grids, which directly depend on the initial condition from the previous iteration, making them implicitly depend on the initial condition as well. In the next iteration, the next two local coarse grids implicitly depend on the initial condition, and so on. In the end, the two-level \namealg algorithm with $F$-relaxation requires $\lceil(N_T-1)/(k-1)\rceil$ iterations until all local coarse grids depend implicitly on the initial condition. Note that in the two-level variant with $FCF$-relaxation, the initial value is also propagated to the next $C$-point on the fine grid due to the additional $CF$-relaxation and, thus, the initial condition is propagated faster. 

\begin{figure}
\begin{subfigure}[c]{0.31\textwidth}
\centering
\begin{adjustbox}{width=\textwidth}
  \begin{tikzpicture}[y=1cm, x=1cm, thick, font=\footnotesize]    

\draw[rounded corners,fill=gray!60,gray!60] (-.2,-.25) rectangle (4.2,-1.75);

\def\w{0.2}
\draw[line width=1.2pt, -, >=latex'](0,0) -- coordinate (x axis) (\w*20,0) node[right] {}; 
\foreach \x in {0,1,2,...,20} \draw[ultra thick, black] (\w*\x,0.125) -- (\w*\x,-0.125) node[below] {} ;
\foreach \x in {0,3,6,9,12,15,18} \draw[ultra thick, black] (\w*\x,0.175) -- (\w*\x,-0.175) node[above] {} ;

\def\y{-0.5}

\draw[line width=1.2pt, -, >=latex'](0*\w,\y) -- coordinate (x axis) (0,\y) node[right] {}; 
\foreach \x in {0} \draw[ultra thick, black] (\w*\x,\y+0.175) -- (0.3*\x,\y-0.175) node[above] {} ;

\def\y{-1}

\draw[line width=1.2pt, -, >=latex'](0*\w,\y) -- coordinate (x axis) (3*\w,\y) node[right] {}; 
\foreach \x in {0,3} \draw[ultra thick, black] (\w*\x,\y+0.175) -- (\w*\x,\y-0.175) node[above] {} ;

\def\y{-1.5}

\draw[line width=1.2pt, -, >=latex'](0*\w,\y) -- coordinate (x axis) (6*\w,\y) node[right] {}; 
\foreach \x in {0,3,6} \draw[ultra thick, black] (\w*\x,\y+0.175) -- (\w*\x,\y-0.175) node[above] {} ;

\def\y{-2}

\draw[line width=1.2pt, -, >=latex'](3*\w,\y) -- coordinate (x axis) (\w*9,\y) node[right] {}; 
\foreach \x in {3,6,9} \draw[ultra thick, black] (\w*\x,\y+0.175) -- (\w*\x,\y-0.175) node[above] {} ;

\def\y{-2.5}

\draw[line width=1.2pt, -, >=latex'](6*\w,\y) -- coordinate (x axis) (12*\w,\y) node[right] {}; 
\foreach \x in {6,9,12} \draw[ultra thick, black] (\w*\x,\y+0.175) -- (\w*\x,\y-0.175) node[above] {} ;

\def\y{-3}

\draw[line width=1.2pt, -, >=latex'](9*\w,\y) -- coordinate (x axis) (15*\w,\y) node[right] {}; 
\foreach \x in {9,12,15} \draw[ultra thick, black] (\w*\x,\y+0.175) -- (\w*\x,\y-0.175) node[above] {} ;

\def\y{-3.5}

\draw[line width=1.2pt, -, >=latex'](12*\w,\y) -- coordinate (x axis) (18*\w,\y) node[right] {}; 
\foreach \x in {12,15,18} \draw[ultra thick, black] (\w*\x,\y+0.175) -- (\w*\x,\y-0.175) node[above] {} ;

\end{tikzpicture}
\end{adjustbox}
\subcaption{\label{fig:implicit_propagation1}Iteration 1}

\end{subfigure}
\begin{subfigure}[c]{0.31\textwidth}
\centering
\begin{adjustbox}{width=\textwidth}
  \begin{tikzpicture}[y=1cm, x=1cm, thick, font=\footnotesize]    

\draw[rounded corners,fill=gray!60,gray!60] (-.2,-.25) rectangle (4.2,-2.75);

\def\w{0.2}
\draw[line width=1.2pt, -, >=latex'](0,0) -- coordinate (x axis) (\w*20,0) node[right] {}; 
\foreach \x in {0,1,2,...,20} \draw[ultra thick, black] (\w*\x,0.125) -- (\w*\x,-0.125) node[below] {} ;
\foreach \x in {0,3,6,9,12,15,18} \draw[ultra thick, black] (\w*\x,0.175) -- (\w*\x,-0.175) node[above] {} ;

\def\y{-0.5}

\draw[line width=1.2pt, -, >=latex'](0*\w,\y) -- coordinate (x axis) (0,\y) node[right] {}; 
\foreach \x in {0} \draw[ultra thick, black] (\w*\x,\y+0.175) -- (0.3*\x,\y-0.175) node[above] {} ;

\def\y{-1}

\draw[line width=1.2pt, -, >=latex'](0*\w,\y) -- coordinate (x axis) (3*\w,\y) node[right] {}; 
\foreach \x in {0,3} \draw[ultra thick, black] (\w*\x,\y+0.175) -- (\w*\x,\y-0.175) node[above] {} ;

\def\y{-1.5}

\draw[line width=1.2pt, -, >=latex'](0*\w,\y) -- coordinate (x axis) (6*\w,\y) node[right] {}; 
\foreach \x in {0,3,6} \draw[ultra thick, black] (\w*\x,\y+0.175) -- (\w*\x,\y-0.175) node[above] {} ;

\def\y{-2}

\draw[line width=1.2pt, -, >=latex'](3*\w,\y) -- coordinate (x axis) (\w*9,\y) node[right] {}; 
\foreach \x in {3,6,9} \draw[ultra thick, black] (\w*\x,\y+0.175) -- (\w*\x,\y-0.175) node[above] {} ;

\def\y{-2.5}

\draw[line width=1.2pt, -, >=latex'](6*\w,\y) -- coordinate (x axis) (12*\w,\y) node[right] {}; 
\foreach \x in {6,9,12} \draw[ultra thick, black] (\w*\x,\y+0.175) -- (\w*\x,\y-0.175) node[above] {} ;

\def\y{-3}

\draw[line width=1.2pt, -, >=latex'](9*\w,\y) -- coordinate (x axis) (15*\w,\y) node[right] {}; 
\foreach \x in {9,12,15} \draw[ultra thick, black] (\w*\x,\y+0.175) -- (\w*\x,\y-0.175) node[above] {} ;

\def\y{-3.5}

\draw[line width=1.2pt, -, >=latex'](12*\w,\y) -- coordinate (x axis) (18*\w,\y) node[right] {}; 
\foreach \x in {12,15,18} \draw[ultra thick, black] (\w*\x,\y+0.175) -- (\w*\x,\y-0.175) node[above] {} ;

\end{tikzpicture}
\end{adjustbox}
\subcaption{\label{fig:implicit_propagation2} Iteration 2}

\end{subfigure}
\begin{subfigure}[c]{0.31\textwidth}
\centering
\begin{adjustbox}{width=\textwidth}
\begin{tikzpicture}[y=1cm, x=1cm, thick, font=\footnotesize]    

\draw[rounded corners,fill=gray!60,gray!60] (-.2,-.25) rectangle (4.2,-3.75);

\def\w{0.2}
\draw[line width=1.2pt, -, >=latex'](0,0) -- coordinate (x axis) (\w*20,0) node[right] {}; 
\foreach \x in {0,1,2,...,20} \draw[ultra thick, black] (\w*\x,0.125) -- (\w*\x,-0.125) node[below] {} ;
\foreach \x in {0,3,6,9,12,15,18} \draw[ultra thick, black] (\w*\x,0.175) -- (\w*\x,-0.175) node[above] {} ;

\def\y{-0.5}

\draw[line width=1.2pt, -, >=latex'](0*\w,\y) -- coordinate (x axis) (0,\y) node[right] {}; 
\foreach \x in {0} \draw[ultra thick, black] (\w*\x,\y+0.175) -- (0.3*\x,\y-0.175) node[above] {} ;

\def\y{-1}

\draw[line width=1.2pt, -, >=latex'](0*\w,\y) -- coordinate (x axis) (3*\w,\y) node[right] {}; 
\foreach \x in {0,3} \draw[ultra thick, black] (\w*\x,\y+0.175) -- (\w*\x,\y-0.175) node[above] {} ;

\def\y{-1.5}

\draw[line width=1.2pt, -, >=latex'](0*\w,\y) -- coordinate (x axis) (6*\w,\y) node[right] {}; 
\foreach \x in {0,3,6} \draw[ultra thick, black] (\w*\x,\y+0.175) -- (\w*\x,\y-0.175) node[above] {} ;

\def\y{-2}

\draw[line width=1.2pt, -, >=latex'](3*\w,\y) -- coordinate (x axis) (\w*9,\y) node[right] {}; 
\foreach \x in {3,6,9} \draw[ultra thick, black] (\w*\x,\y+0.175) -- (\w*\x,\y-0.175) node[above] {} ;

\def\y{-2.5}

\draw[line width=1.2pt, -, >=latex'](6*\w,\y) -- coordinate (x axis) (12*\w,\y) node[right] {}; 
\foreach \x in {6,9,12} \draw[ultra thick, black] (\w*\x,\y+0.175) -- (\w*\x,\y-0.175) node[above] {} ;

\def\y{-3}

\draw[line width=1.2pt, -, >=latex'](9*\w,\y) -- coordinate (x axis) (15*\w,\y) node[right] {}; 
\foreach \x in {9,12,15} \draw[ultra thick, black] (\w*\x,\y+0.175) -- (\w*\x,\y-0.175) node[above] {} ;

\def\y{-3.5}

\draw[line width=1.2pt, -, >=latex'](12*\w,\y) -- coordinate (x axis) (18*\w,\y) node[right] {}; 
\foreach \x in {12,15,18} \draw[ultra thick, black] (\w*\x,\y+0.175) -- (\w*\x,\y-0.175) node[above] {} ;

\end{tikzpicture}
\end{adjustbox}
\subcaption{\label{fig:implicit_propagation3} Iteration 3}

\end{subfigure}
	\caption{\label{fig:implicit_propagation} Implicit propagation (gray box) of the initial condition for a two-level \namealg variant with $F$-relaxation and $N_t=21$, $m=3$ and $k=3$.
}
\end{figure}
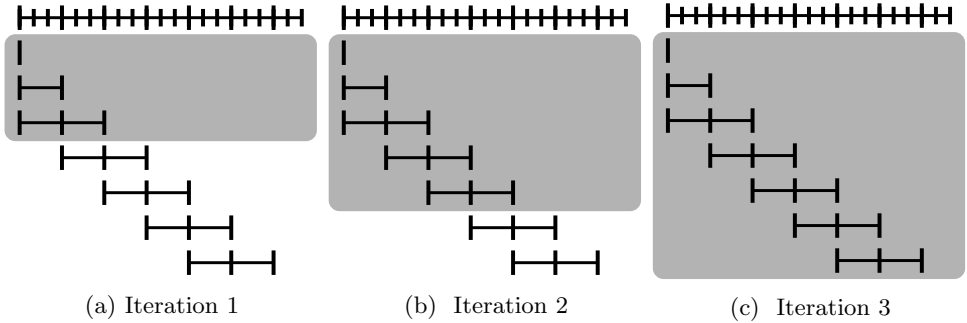

\subsection{Size of local coarse grids}\label{sec:heat_equation}

The parameter $k$ defines the size of the local coarse grids and, thus, the number of sequential solves needed on the coarse grid. In the following, we consider the influence of the parameter $k$ on the convergence of \namealg applied to a standard model problem for parallel-in-time methods, the one-dimensional heat equation,
\begin{equation} \label{eq:heat_equation}
u_t - u_{xx} = b(x,t) \;\; \text{ in } \; [0,3]\times[0,\pi],
\end{equation}
subject to the initial condition $u(x,0) = \sin(\pi x)$ and homogeneous Dirichlet boundary conditions in space. The forcing term is choosen as $b(x,t)=-\sin(\pi x) (\sin(t) - \pi^2 \cos(t))$, such that the exact solution is given by $u(x,t) = \sin(\pi x)\cos(t)$.

We discretize \eqref{eq:heat_equation} using second-order central finite differences with $1{,}025$ degrees of freedom in space and on an equidistant time grid with $16{,}384$ time points using backward Euler. We investigate the behavior of the \namealg algorithm for the two-level case, and choose different coarsening factors $m$ and distances $k$. We restrict ourselves to the two-level case, since we want to study the effect of using local coarse grids of various sizes $k$. For all simulations, the stopping tolerance is based on the discrete 2-norm of the absolute space-time residual with a tolerance of $10^{-7}$ and a random initial guess is chosen for all time points except for the initial condition. This choice guarantees that no knowledge of the right-hand side is used that could affect the convergence. Note that this is only a good choice for investigating the behavior of the algorithm and is not recommended in practice.

\Cref{fig:heat_diff_k-l} shows the required number of iterations to reach the stopping criterion for a two-level \namealg variant with $F$-relaxation and a coarsening factor of $m=128$ as a function of size $k$. 
Note that while the variant with $k=128$, which is equivalent to Parareal, performs $127$ sequential time integrations on the coarse level, equivalent convergence can be obtained with $k=12$, $10\times$ less coarse-grid solves. 
\Cref{fig:heat_diff_k-r} presents iterations to convergence as a function of \emph{the ratio} of local to global coarse-grid points. For three different coarsening factors, we see that convergence does not improve beyond the same ratio of $k/$(\#C-points), in this case about $0.08$. Although this parameter is likely problem specific, \Cref{fig:heat_diff_k-r} does suggest the choice of $k$ is relatively agnostic to coarsening factor by posing it relative to the global coarse-grid size.

\begin{figure}[ht!]
  \centering
	\begin{subfigure}[t]{0.475\textwidth}
		\includegraphics[width=0.96\linewidth]{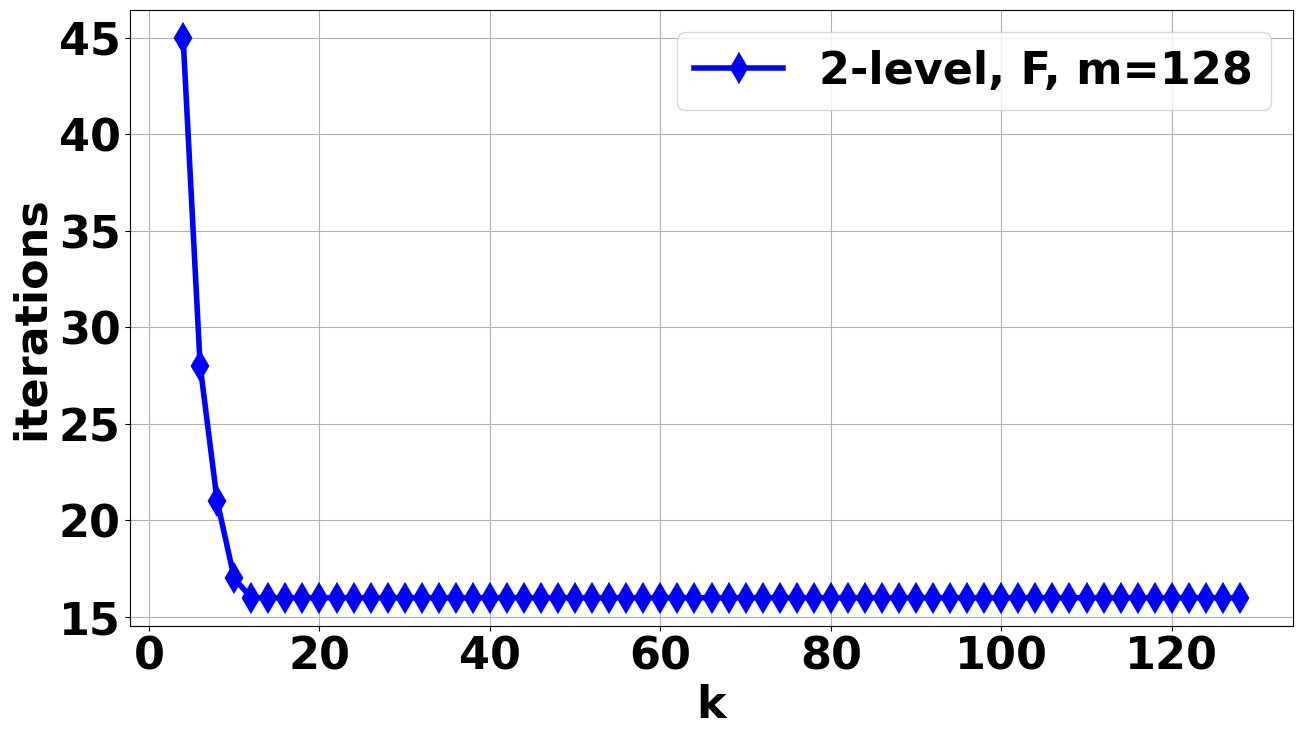}
		\caption{Iterations to convergence as a function of $k$.}
		\label{fig:heat_diff_k-l}
  \end{subfigure}\quad
	\begin{subfigure}[t]{0.475\textwidth}
		\includegraphics[width=\linewidth]{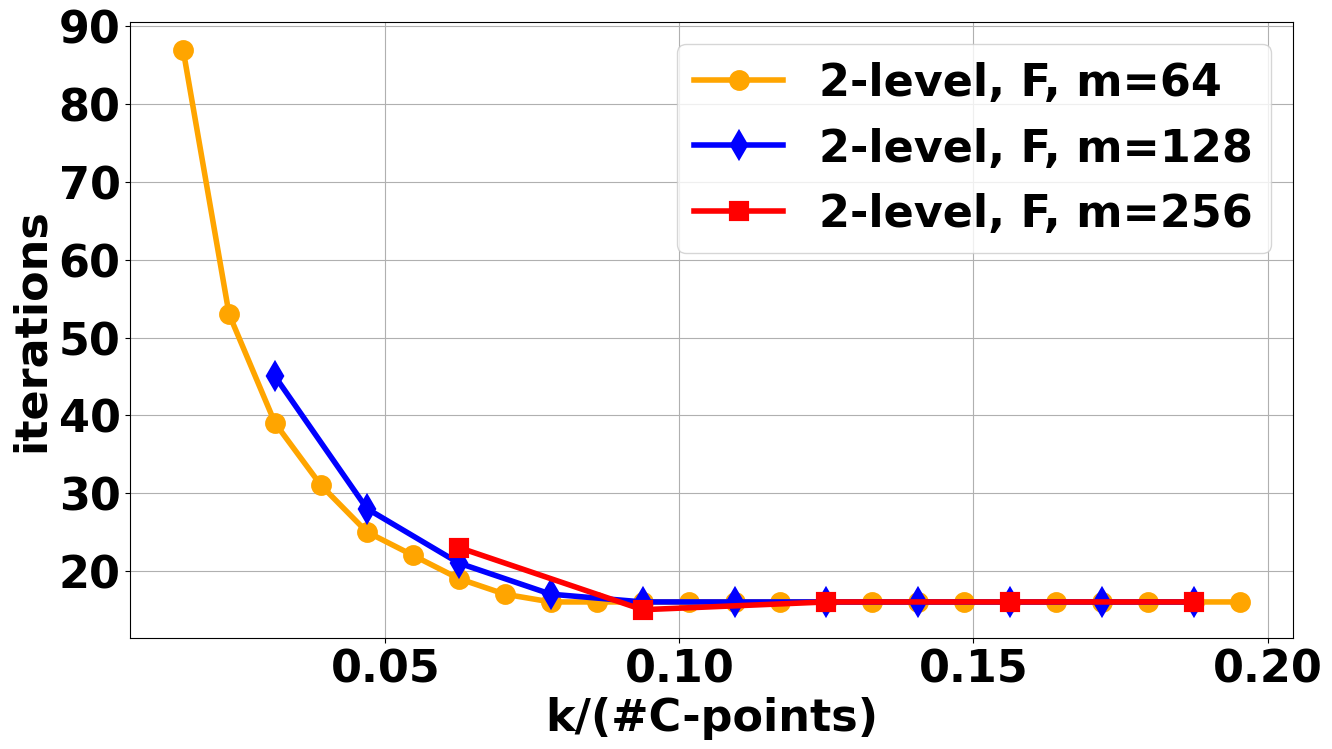}
		\caption{Iterations to convergence as a function of $k$ divided by
		the number of C-points.}
		\label{fig:heat_diff_k-r}
  \end{subfigure}
  \caption{{Required iterations for \namealg variants for the 1D heat equation.}}
\end{figure}

\Cref{fig:heat_conv-l} plots convergence behavior for two-level \namealg with coarsening factor $m=128$ and various choices for $k$. The variant with $k=128$ (i.\,e., Parareal) has uniform convergence behavior, while convergence for smaller $k$ is split into three parts. Initially, convergence is slower than for $k=128$. The smaller $k$ is chosen, the slower is the convergence. After a few iterations, once the initial condition has been implicitly propagated over all local coarse grids, a drastic improvement in convergence can be seen for all three variants. This lasts for a few iterations, until convergence rates then asymptote to that of Parareal. \Cref{fig:heat_conv-r} shows the $\ell^\infty$-norm of the error for the four variants and the time-stepping method, verifying that all variants compute the same solution up to a certain tolerance. For all variants, the norm of the error to the exact solution has the same order of magnitude.

\begin{figure}[hbt!]
	  \centering
	\begin{subfigure}[t]{0.475\textwidth}
		\includegraphics[width=\linewidth]{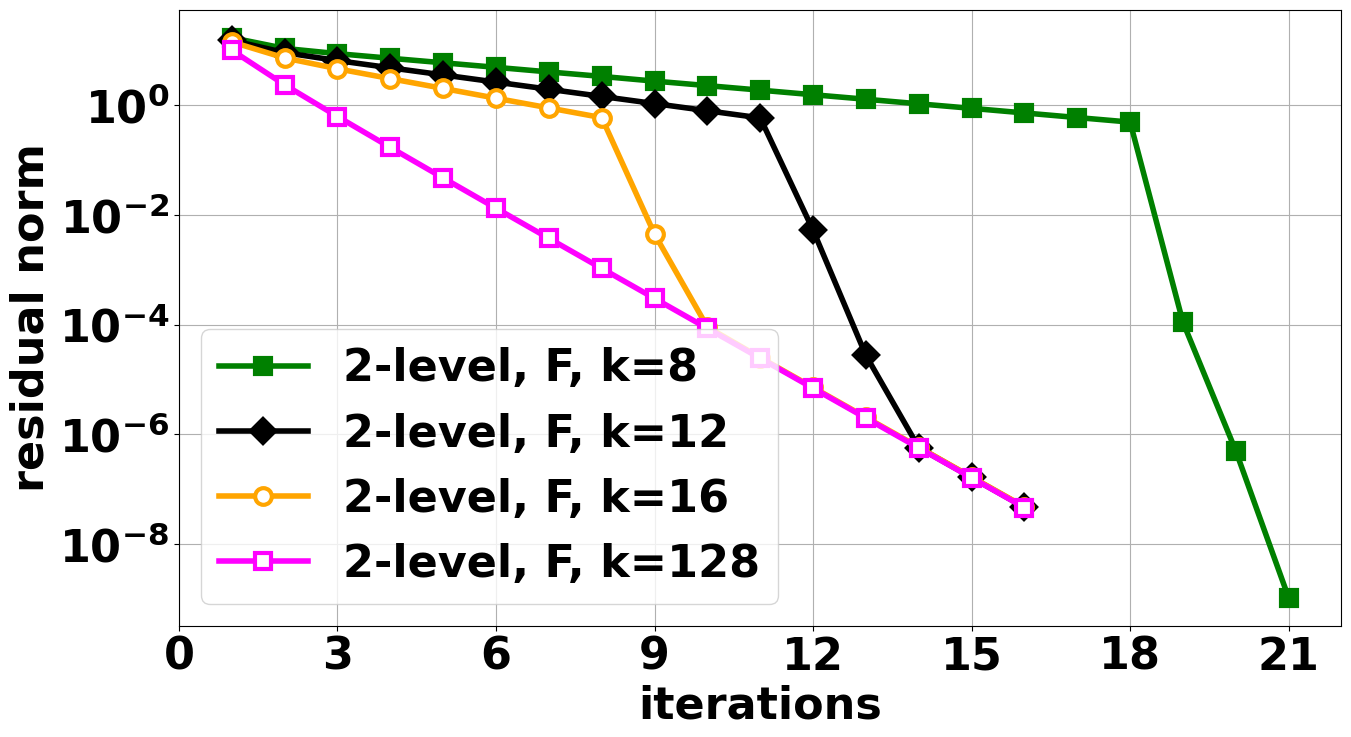}
		\caption{Residual norm as a function of iteration for two-level \namealg with coarsening factor $m=128$.}
		\label{fig:heat_conv-l}
  \end{subfigure}\quad
	\begin{subfigure}[t]{0.475\textwidth}		
	\includegraphics[width=\linewidth]{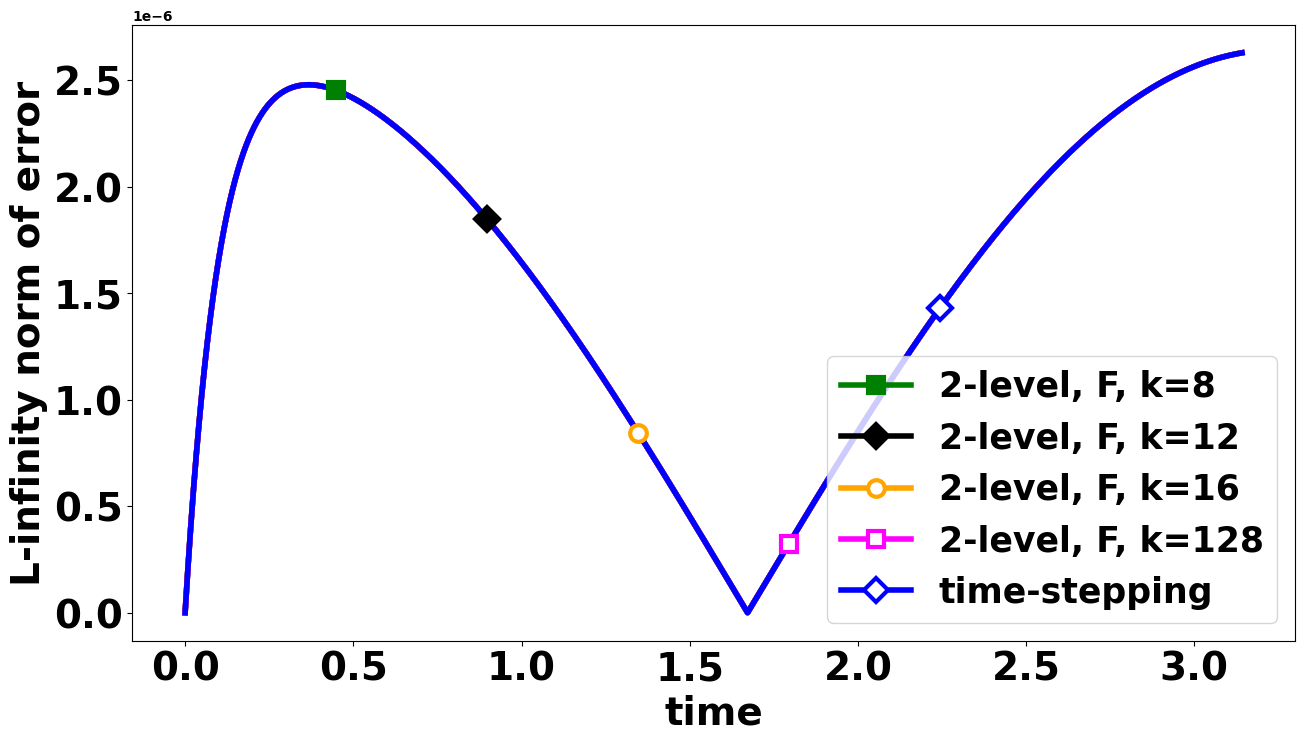}
	\caption{$\ell^\infty$-norm of error with respect to the exact solution at each point in time.}
	\label{fig:heat_conv-r}
  \end{subfigure}
  \caption{{Results of \namealg variants in terms of residual norm and error.}}
\end{figure}

\section{Parallel results}\label{sec:parallel_experiments}
In this section, we present numerical results for \namealg applied to two challenging, nonlinear time-dependent problems: the 2D Gray Scott example of a chemical reaction of two substances, and a realistic model of an electrical machine. In addition, we apply different two- and three-level variants of \namealg and compare runtimes and iteration counts with the corresponding variants of Parareal and MGRIT, respectively. For the two-level variants, we apply $F$-relaxation and we choose the coarsening factor such that the number of coarse-grid points is equal to the number of processes used for the simulation enabling perfect parallelization on the fine level. For the three-level algorithms, we apply non-uniform coarsening strategies with a coarsening factor of $m_0=64$ between the fine and the intermediate level, and different factors between the intermediate and the coarse level. For the \namealg algorithm, we additionally choose $k=N_t^{(1)}/2$ or $k=N_t^{(2)}/2$, respectively, i.\,e., we halve the number of grid points on the coarsest level to be computed serially. 

All simulations were performed on an Intel Xeon Phi Cluster consisting of four 1.4 GHz Intel Xeon Phi processors. The code for all experiments can be found in the \texttt{PyMGRIT} repository \cite{pymgrit-github}, and this package is also used for simulations with Parareal and MGRIT. For all experiments, we use all possible resources for temporal parallelization, i.\,e., we do not use spatial parallelization (largely due to limited resources). For a brief discussion on the effect of spatial parallelism for the different algorithms, see \Cref{app:AppendixC}.

\subsection{Gray Scott}\label{sec:parallel_experiments:gray_scott}
We consider the 2D Gray-Scott problem \cite{Pearson189} of a chemical reaction of two components $\mathcal{U}$ and $\mathcal{V}$, given by
\begin{subequations}
\begin{align*}
u_t &= D_u\Delta u - uv^2+F(1-u), \\
v_t &= D_v\Delta v + uv^2 - (K+F)u,
\end{align*}
\end{subequations}
where $u=u(x,y,t)$ and $v=v(x,y,t)$ are the concentration of $\mathcal{U}$ and $\mathcal{V}$, respectively, $D_u$ and $D_v$ are the diffusion rates, $F$ is the feed rate, and $K$ is the removal rate. For our simulations, we choose the spatial domain $[0,2.5]^2$ with periodic boundary conditions, and the time interval $[0,256]$. Further, we choose the parameters $F=0.024$, $K=0.06$, $D_u=8\times10^{-5}$, and $D_v=4\times10^{-5}$, and we consider the initial value
\begin{subequations}
\begin{align*}
u(x,y,0) &= 1 - 2 (0.25\sin(4\pi x)^2\sin(4\pi y)^2), && (x, y)\in [1,1.5]^2\\
v(x,y,0) &= 0.25\sin(4\pi x)^2 \sin(4\pi y)^2, && (x, y)\in [1,1.5]^2,
\end{align*}
\end{subequations}
and $u(x,y,0)=1$ and $v(x,y,0)=0$ otherwise. The problem is discretized using central finite differences with $128^2$ points in space and on an equidistant time grid with $16{,}384$ points using backward Euler. We solve the resulting nonlinear problem using Newton's method of \texttt{PETSc} \cite{petsc-user-ref} with a relative and absolute tolerance of $10^{-10}$.

We apply two-level and three-level \namealg and Parareal variants with nested iterations to compute an improved initial guess. In the nested iteration strategy, Parareal and MGRIT solve the global coarse-grid problem at the coarsest level, while \namealg uses the local coarse grids instead of the global grid. The stopping criterion for all variants is based on the discrete 2-norm of the space-time residual with a tolerance of $10^{-7}$. 

\begin{table*}
	\renewcommand{\arraystretch}{1.3}
	\begin{center}
	\begin{tabular}{ |l|c|c|c|c|c|c|c|c|}
		\hline
		\multirow{3}{*}{Method} & \multirow{3}{*}{$m$} & \multirow{3}{*}{$k$} & \multirow{3}{*}{\# Procs} & \multirow{3}{*}{\# Iters} & \multirow{3}{*}{\parbox{1.3cm}{\centering Setup\\ time}} & \multirow{3}{*}{\parbox{1.4cm}{\centering Solve \\ time}} & \multirow{3}{*}{\parbox{1.5cm}{\centering Speedup w.r.t\\ Parareal}} 

		\\
		 	   &            &              &            &            &            &  &\\
		 	   		 	   &            &              &            &            & &            & 

		 	    \\ \hline \hline
		\multirow{4}{*}{Parareal}& 512 &- & 32 & 12 & 1{,}338 s & 172{,}068 s & - 

		\\ \cline{2-8}
		& 256 & - & 64 & 10 & 2{,}308 s & 89{,}288 s &-

		\\ \cline{2-8} 
		& 128 &- & 128 & 9 & 3{,}958 s & 66{,}485 s &-

		\\ \cline{2-8}
		& 64 &- & 256 & 7 & 7{,}646 s & 66{,}272 s  &-

		\\ \hline \hline
		\multirow{4}{*}{\parbox{1.85cm}{Two-level\\ \namealgnospace}}& 512&16 & 32 & 12 & 701 s & 165{,}351 s & 1.04 

		\\ \cline{2-8} 
		& 256 & 32 & 64 & 10 & 1{,}167 s & 78{,}230 s & 1.15 

		\\ \cline{2-8}
		& 128&64 & 128 & 9 & 2{,}022 s & 48{,}675 s & 1.39 

		\\ \cline{2-8}
		& 64&128 & 256 & 7 & 3{,}812 s & 39{,}895 s & 1.69  

		\\ \hline	
		\end{tabular}
	\end{center}
	\caption{Iteration counts, setup times (for computing an improved initial guess), and runtimes of the solve phase of two-level \namealg and Parareal variants applied to the 2D Gray-Scott problem for various numbers of processes.}
	\label{tab:gray_two_level}
\end{table*}

\Cref{tab:gray_two_level} shows the number of iterations and runtimes of the setup and solve phases of two-level \namealg and Parareal variants. The results show that iteration counts of \namealg are equal to iteration counts of Parareal with the same coarsening strategy. Furthermore, a finer coarse grid significantly reduces the number of iterations required. While 12 iterations are needed for the two-level variants with a coarse grid of only 32 points, this number is reduced to seven iterations for the variants with 256 coarse-grid points. However, this reduction in iterations is accompanied by significantly more expensive sequential coarse-grid solves, reflected in increasing setup times with increasing points on the coarse grid. However, if the number of points on the coarse grid doubles, the setup time does not double. This is because a smaller time step requires fewer Newton iterations and, thus, affects the duration of the application of each time integration. The setup time of each \namealg variant is about half as long as that of the corresponding Parareal variant due to the choice of $k$.  
Looking at the runtimes of the solve phase, we see that \namealg is always faster than the corresponding Parareal variant, achieving a speedup of up to a factor of 1.69 compared to Parareal. Furthermore, we see that while the Parareal algorithm does not scale for more than 128 processes, since the serial part of the algorithm dominates the benefit of the additional parallelization of the fine level, the \namealg algorithm shows good parallel scaling up to 256 processes.

\begin{table*}
	\renewcommand{\arraystretch}{1.3}
	\begin{center}
	\begin{tabular}{ |l|c|c|c|c|c|c|c|c|}
		\hline
		\multirow{2}{*}{Method} & \multirow{2}{*}{$m$} & \multirow{2}{*}{$k$}  & \multirow{2}{*}{\# Iters}& \multirow{2}{*}{\parbox{1.5cm}{\centering Setup\\ time}} & \multirow{2}{*}{\parbox{1.5cm}{\centering Solve\\ time}} & \multirow{2}{*}{\parbox{2.2cm}{\centering Speedup w.r.t\\ MGRIT}} 

		 \\

		&&&&&& \\ \hline \hline
		\multirow{4}{*}{MGRIT}& (64,16) &-  & 7 & 3{,}525 s & 43{,}604 s & -

		\\ \cline{2-7}
		& (64,8)&- & 7  & 3{,}498 s & 38{,}420 s & - 

		\\ \cline{2-7}
		& (64,4)&-& 7  & 4{,}980 s & 42{,}285 s & - 

		\\ \cline{2-7}
		& (64,2)&- & 6  & 8{,}075 s & 45{,}131 s & - 

		 \\ \hline \hline
		\multirow{4}{*}{\namealgnospace}& (64,16)& 8  & 7 & 2{,}864 s & 41{,}054 s & 1.07 

		\\ \cline{2-7} 
		& (64,8)& 16  & 7 & 2{,}174 s & 33{,}688 s & 1.17

		\\ \cline{2-7}
		& (64,4)& 32 & 7 & 2{,}713 s & 34{,}063 s & 1.29

		\\ \cline{2-7}
		& (64,2)& 64  & 7 & 4{,}247 s & 38{,}571 s & 1.24 

		\\ \hline	
		\end{tabular}
	\end{center}
	\caption{Iteration counts and runtimes of the setup and solve phase on 256 processes of three-level \namealg and MGRIT variants with $FCF$-relaxation and different non-uniform coarsening strategies applied to the 2D Gray-Scott problem.}
	\label{tab:gray_three_level}
\end{table*}

\Cref{tab:gray_three_level} presents similar results to \Cref{tab:gray_two_level} for four different three-level variants of \namealg and MGRIT with $FCF$-relaxation on 256 processes.
The number of iterations here does not depend as much on the coarsest grid as in the two-level case, but we still see that the MGRIT variant with the coarsening strategy $(64,2)$, i.\,e., the variant with the most points on the second level, requires the fewest iterations. The corresponding \namealg variant needs one additional iteration, but after the sixth iteration the stopping criterion is slightly missed. A minimal increase in $k$ would likely eliminate this extra iteration. 
In terms of solve times, we see that all variants of the \namealg algorithm are faster than the corresponding MGRIT variants, even the variant that requires an additional iteration. Again, the more points on the coarsest level, the higher the speedup of \namealg over MGRIT.

\begin{figure}
	\centerline{\includegraphics[width=.8\linewidth]{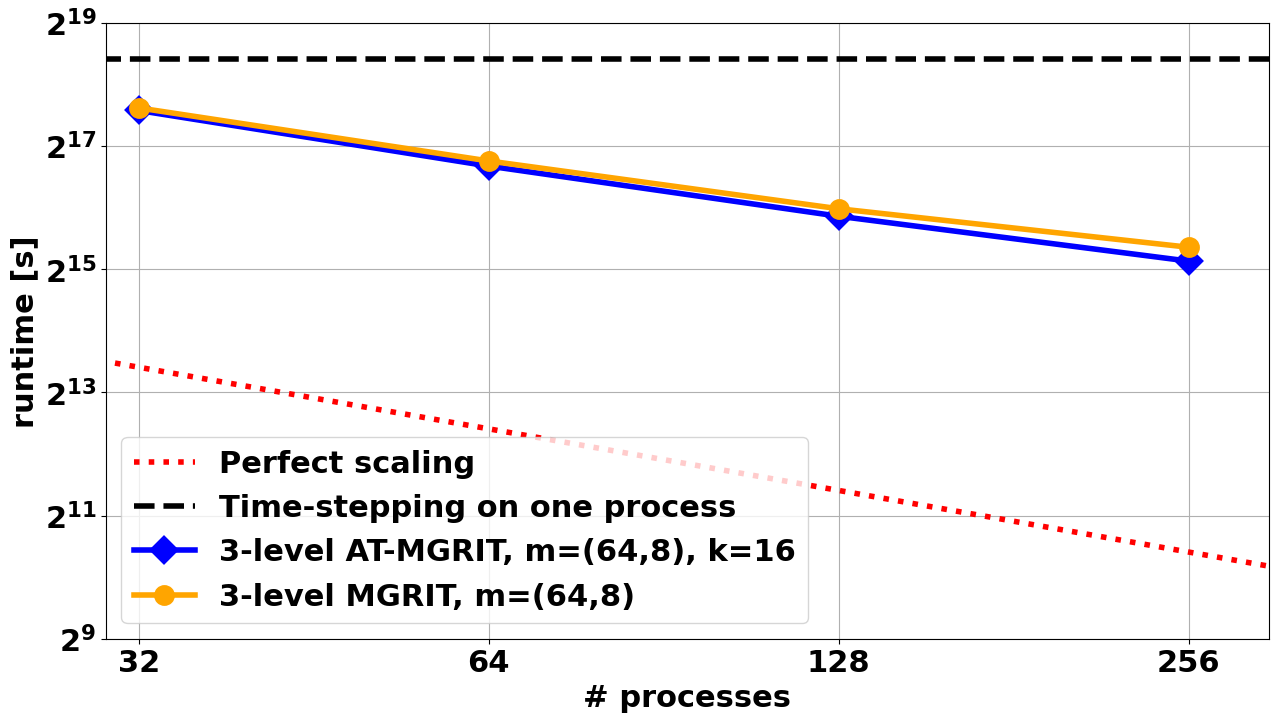}}
	\caption{Strong scaling results for one three-level \namealg variant, {the corresponding MGRIT variant,} and sequential time-stepping on one process applied to the 2D Gray-Scott problem. The red dotted line indicates the perfect scaling based on the runtime of time-stepping.}
	\label{fig:gray_scaling}
\end{figure}

\Cref{fig:gray_scaling} shows the overall runtime (setup and solve) for one \namealg variant (blue line) {and the corresponding MGRIT variant (orange line)} as a function of the number of processes and the runtime of time-stepping using only one process (black dashed line) which is about four days. For reference, the red dotted line indicates the behavior of perfect scaling based on the runtime of time-stepping. While the runtime almost halves when using 32 and 64 processes, the scaling curve starts to flatten slightly with a higher number of processes. This is mainly because only the fine level computations have an additional benefit from more processes due to the chosen coarsening strategy, and the runtime of the coarser levels becomes more and more dominant. {However, compared to the corresponding MGRIT variant, the \namealg variant scales better due to its reduced work at the coarsest level.}

\subsection{Induction machine}\label{sec:parallel_experiments:electrical_machine}

The standard approach for the simulation of electrical machines is based on neglecting the displacement current in Maxwell's equations \cite{jackson_classical_1998}. The resulting magnetoquasistatic approximation, the so-called eddy current problem, is defined in terms of the unknown magnetic vector potential $\mathbf{A}: \Omega \times (t_0,t_f] \rightarrow \mathbb{R}^3$ as 
\begin{subequations}\label{eq:system_eddy_current}
\begin{alignat}{2}
		\sigma \partial_t \mathbf{A} + \nabla \times \left(\nu(\cdot) \nabla\times\mathbf{A}\right)&= \mathbf{J}_{s} \; &&\text{in} \; \Omega \times (t_0,t_f], \nonumber \\ 
		\mathbf{n} \times \mathbf{A} &= 0 &&\text{on} \; \partial \Omega, \nonumber  
\end{alignat}
\end{subequations}
with initial value $\mathbf{A}\left(\mathbf{x},t_0\right) = \mathbf{A}_0(\mathbf{x})$, spatial domain $\Omega$, consisting of the rotor, stator, and the air gap in between, and where $(t_0,t_f]$ is the time interval. The magnetic flux $\mathbf{B}=\nabla\times\mathbf{A}$ vanishes at the boundaries $\partial \Omega$ of the spatial domain (Dirichlet boundary condition). Three ($n_s = 3$) distributed stranded conductors are modeled by the source current density
$
		\mathbf{J}_{s} = \sum_{s=1}^{n_{s}} \mathbf{\chi}_s i_s,
$
with winding functions $\mathbf {\chi}_s: \Omega \rightarrow \mathbb{R}^3$ and currents $i_s: \timeint \rightarrow \mathbb{R}^3$. An attached electrical network provides a connection between the so-called flux-linkage, i.\,e., the spatially integrated time derivative of the magnetic vector potential, and the voltage. The scalar electrical conductivity $\sigma(\mathbf{x}) \geq 0$ and the (nonlinear) magnetic reluctivity $\nu(\mathbf{x},|\nabla\times\mathbf{A}|) > 0$ encode the geometry. To consider the rotation of the rotor, the problem is extended by an additional equation of motion; we refer to \cite{Bolten_etal_2020} for more details.

In the following, we consider a cross-section in the $x,y$-plane to reduce the spatial domain to a two-dimensional ($2$D) domain $\Omega_{2D} \subset \mathbb{R}^2$. Discretizing 
in space using finite elements with $n_a$ degrees of freedom yields a system of differential-algebraic equations of the form
\begin{subequations}\label{eq:semi_discrete_system}
	\begin{align}
		M \mathbf{u}'(t) + K(\mathbf{u}(t)) \mathbf{u}(t) &= \mathbf{f}(t), \quad t \in \timeint \label{eq:semi_discrete_system_a}\\
		\mathbf{u}(t_0) &=\mathbf{u}_0, \label{eq:semi_discrete_system_b}
	\end{align}
\end{subequations}
with unknown $\mathbf{u}^\top=[\mathbf{a}^\top, \mathbf{i}^\top, \theta, \omega] : \timeint \rightarrow \mathbb{R}^n$. At each point in time $t \in (t_0,t_f]$, the solution $\mathbf{u}(t)\in\mathbb{R}^n$ consists of the magnetic vector potentials $\mathbf{a}\left(t\right) \in \mathbb{R}^{n_a}$, the currents of the three phases $\mathbf{i}\left(t\right) \in \mathbb{R}^3$, the rotor angle $\theta\left(t\right) \in \mathbb{R}$, and the angular velocity of the rotor $\omega\left(t\right) \in \mathbb{R}$. The given voltages $\mathbf{v}(t) \in \mathbb{R}^3$ and the mechanical excitation define the right-hand side $\mathbf{f}(t)$. Note, that \eqref{eq:semi_discrete_system} is a differential-algebraic equation of index-1 \cite{Bartel_2011aa, Cortes-Garcia_2019aa} due to the presence of non-conducting materials, i.\,e., regions with $\sigma = 0$, in the domain, which can be treaten by standard techniques in a parallel-in-time setting \cite{Schops_2018aa}.  

The multi-slice finite element model ``im\_$3$\_kw'' \cite{JGyselinck2001_multi_slice} of a four-pole squirrel cage induction machine is used for modeling the semi-discrete problem \eqref{eq:semi_discrete_system}. A mesh representation with $n_a=4449$ degrees of freedom is generated using Gmsh \cite{gmsh, gmsh_website}. Further, we choose $t_0=0$, $t_f=0.2$ and a time grid with $N_t=16{,}384$ points, which corresponds to a time-step size of $\delta t \approx 10^{-5}$. Note that this time interval is required approximately to reach the steady-state of the machine. For the simulations, only a quarter of the machine is considered with periodic boundary conditions and the \texttt{GetDP} library \cite{PDular_etal_1998,getdp}, which implements the time integration using backward Euler, is used for the computations. At each time step, the \texttt{GetDP} library is called and a nonlinear problem is solved by applying Newton's method with damping. For the stopping criterion of the Newton solver, we choose a relative error of $10^{-6}$. The machine is supplied by a three-phase sinusoidal voltage source, and, as proposed in \cite{JGyselinck2001_multi_slice}, an initial ramp-up of the applied voltage is used for reducing the transient behavior of the motor for the time interval $[0,0.04]$.

In the following, we present results for one Parareal variant and several two-level \namealg variants. For all experiments, we use an improved initial guess given by a global coarse-grid solve. Unfortunately, the use of too large time steps on the coarse level in the simulation of the electrical machine in the time-parallel setting causes at least one nonlinear solve within \texttt{GetDP} to fail to converge. To overcome this problem, we apply subcycling at the coarse level, i.\,e., we apply three smaller steps per time step at the coarse level, reducing the time step size and improving the accuracy of the solution. For all algorithm, we use a convergence criterion based on the relative change in Joule losses, an important quantity of an electrical machine, at all $C$-points of two successive iterations; see \cite{Bolten_etal_2020} for details. The algorithm terminates when the maximum norm of the relative difference of two successive iterations is less than $1\%$. Note that this criterion does not guarantee convergence to the discrete time-stepping solution, but for each variant it has been verified that it does indeed iterate to the discrete time-stepping solution.

\begin{table*}
	\renewcommand{\arraystretch}{1.3}
	\begin{center}
	\begin{tabular}{ |l|c|c|c|c|c|c|c|c|}
		\hline		\multirow{3}{*}{Method} & \multirow{3}{*}{$k$} & \multirow{3}{*}{Iterations} & \multirow{3}{*}{\parbox{2.5cm}{\centering Total time\\ (Setup + Solve)}} & \multirow{3}{*}{\parbox{2.0cm}{\centering Speedup w.r.t\\ Parareal}} & \multirow{3}{*}{\parbox{2.2cm}{Speedup w.r.t \\ time-stepping \\ on one process}}  \\
		 	   &            &              &            &            &     \\
		 	   		 	   &            &              &            &            &         
		 	    \\ \hline \hline
		\multirow{1}{*}{Parareal}& - & 5 & 40{,}544 s & - & 4.64 \\ \hline \hline
		\multirow{5}{*}{\parbox{1.85cm}{Two-level\\ \namealgnospace}}
		& 16 & 6 &  32{,}710 s & 1.24 & 5.75  \\ \cline{2-6} 
		& 18 & 6 &  33{,}337 s & 1.22 & 5.64  \\ \cline{2-6} 		
		& 20 & 6 &  33{,}996 s & 1.19 & 5.53  \\ \cline{2-6} 
		& 22 & 6 & 34{,}626 s & 1.17 & 5.43  \\ \cline{2-6}
		& 24 & 5 & 30{,}582 s & 1.33 & 6.15  \\ \hline
		\end{tabular}
	\end{center}
\caption{Iteration counts and total runtimes on 64 processes of Parareal and various two-level \namealg variants with a coarsening factor of 256 for the simulation of the induction machine.}
	\label{tab:results_nonlin2}
\end{table*}

\Cref{tab:results_nonlin2} shows the number of iterations, total runtimes, and the speedup compared to sequential time-stepping on one process for different \namealg variants and Parareal. Furthermore, the speedup compared to Parareal is shown for all \namealg variants. Comparing the number of iterations, the Parareal algorithm and \namealg with $k=24$ both require five iterations to convergence. For smaller $k$, six iterations are needed to reach the stopping criterion. Despite the increased number of iterations for some variants, the total runtime of all \namealg variants is smaller than that of Parareal, with the largest speedup of a factor of approximately 1.33. Note that the time for the setup phase is the same for all variants and is about $2{,}891$ s. Note also that both algorithms treat the fine level identically, and the improvement comes only from using local coarse grids instead of one global coarse grid. For comparison, the simulation time using serial time-stepping on one process is $188{,}123$ s, which is more than two days. The fastest \namealg variant needs less than nine hours, which corresponds to a speedup of a factor of 6.15.

\section{Conclusion}\label{sec:conclusion}
In this paper, we introduce the new \namealg algorithm, an iterative parallel-in-time algorithm for solving time-dependent problems. While the fine level(s) are treated as in the Parareal/MGRIT algorithm, the \namealg algorithm modifies the coarsest level computations. Instead of considering one global time grid that covers the entire time interval and is solved sequentially at the coarsest level, the \namealg algorithm uses a number of truncated, overlapping local coarse grids, one for each point on the global coarse grid. Each of these time grids is independent and covers only a fraction of the global time interval, allowing each problem to be solved simultaneously and reducing the serial work of the algorithm at the coarsest level.

Theoretical and numerical investigations of the algorithm show that the use of local coarse grids, which are not all connected to the initial value of the problem, introduces an additional error term compared to classical Parareal/MGRIT, which may affect the convergence at the beginning of the algorithm. However, the \namealg algorithm takes advantage of its iterative nature and eliminates this additional error term during several iterations, achieving convergence in the same number of iterations as with Parareal/MGRIT while significantly reducing the serial cost on the coarse level. Simulation of challenging nonlinear problems shows that the MGRIT algorithm can provide significant speedup compared to Parareal/MGRIT. Future work involves implementing and studying \namealg on GPUs and emerging shared-memory computing architectures, where the local and asynchronous aspect of coarse grid solves is likely to be particularly advantageous.

\appendix
\section{Error propagation for ideal local coarse problems}\label{app:AppendixA}
We consider error-propagation $\mathcal{E}_e$ for one $C$-point $p=0,\hdots,N_T$ using the ideal local coarse-grid problem (i.\,e., exact coarse grid and inverses). The structure of the matrices differs for the first $k$ $C$-points from all other $C$-points, since the local coarse grids corresponding to the first $k$ $C$-points contain all $C$-points prior in time. Here, we want to study the effect of local coarse grids that do not extend back to $t=0$. Therefore, we start by considering all local coarse grids $p\geq k$ and subsequently discuss the structure for $p<k$. For $p\geq k$ the matrix $R_I^{(p)}A$ is given by
\begin{center}
\begin{equation}\label{eq:a1}
\begin{tikzpicture}[
baseline={-0.5ex}, 
mymatrixenv, ]

\matrix [matrix of math nodes,nodes in empty cells,left delimiter={[},right delimiter={]},inner sep=1pt,outer sep=1.5pt,column sep=2pt,row sep=2pt,nodes={minimum width=15pt,anchor=center,inner sep=0pt,outer sep=0pt},inner sep=4pt,
column 3/.style={text width=4.1em, align=center},
column 6/.style={text width=4.1em, align=center},
column 7/.style={text width=4.1em, align=center},
column 10/.style={text width=4.1em, align=center},
row 1/.style={text centered,minimum height = 2em},
row 2/.style={text centered,minimum height = 2em},
row 3/.style={text centered,minimum height = 2em}, inner sep=4pt,label={[]right:$\;\;\;\,,$}] (m)
{
&&  
\mathbf{0}_{1\times{(m-1)}}&-\Phi&
I&\mathbf{0}_{1\times{(m-1)}}&
&&
&&
&  \\
&&  
&&
\ddots&&
\ddots& &
&&
&  \\
&& 
&&
&&
\mathbf{0}_{1\times{(m-1)}}&-\Phi&
I&\mathbf{0}_{1\times{(m-1)}}&
&  \\
};

\mymatrixbraceright{1}{3}{$k$}
\mymatrixbracetop{1}{2}{$\scriptscriptstyle (p-k)m$}
\mymatrixbracetop{3}{4}{$\scriptscriptstyle m$}
\mymatrixbracetop{5}{6}{$\scriptscriptstyle m$}
\mymatrixbracetop{11}{12}{$\scriptscriptstyle N_t-(p+1)m$}

\draw[black, thick, dashed] (m-1-2.north east) -- (m-3-2.south east); 
\draw[black, thick, dashed] (m-1-5.north west) -- (m-3-5.south west);    
\draw[black, thick, dashed] (m-1-6.north east) -- (m-3-6.south east);  
\draw[black, thick, dashed] (m-1-9.north west) -- (m-3-9.south west);
\draw[black, thick, dashed] (m-1-11.north west) -- (m-3-11.south west);

\draw[black, thick, dashed] (m-1-1.south west) -- (m-1-12.south east);  
\draw[black, thick, dashed] (m-2-1.south west) -- (m-2-12.south east);

\end{tikzpicture}
\end{equation}
\end{center}
which initially contains $(p-k)m+m$ columns corresponding to the omitted points on the local coarse grid. The following $km$ columns correspond to the $C$-points present on the local coarse grid and their corresponding following interval of $F$-points. Next, we consider
\begin{center}
\begin{tikzpicture}[baseline={-0.5ex},mymatrixenv]

\matrix [mymatrix,inner sep=4pt,label={[]left:$P_S^{(p)}(A_c^{(p)})^{-1}=$},label={[]right:$\; \; \; \,$,}] (m)
{
&                &&&  \\

\Phi^{(k-1)m}&\hdots& \Phi^{2m}               &\Phi^m&I  \\
\Phi^{(k-1)m+1}&\hdots& \Phi^{2m+1}               &\Phi^{m+1}&\Phi  \\
\vdots&& \vdots              &\vdots&\vdots  \\
\Phi^{km-1}&\hdots& \Phi^{3m-1}               &\Phi^{2m-1}&\Phi^{m-1}  \\

&      && &\\
};

\mymatrixbraceright{1}{1}{$\scriptscriptstyle pm$}
\mymatrixbraceright{2}{5}{$\scriptscriptstyle m$}
\mymatrixbraceright{6}{6}{$\scriptscriptstyle N_t-(p+1)m$}
\mymatrixbracetop{1}{5}{$\scriptscriptstyle k$}

\draw[black, thick, dashed] (m-1-1.south west) -- (m-1-5.south east);  
\draw[black, thick, dashed] (m-5-1.south west) -- (m-5-5.south east);  
\end{tikzpicture}
\end{center}
with $A_c^{(p)}$ as in \eqref{eq:RAP},
which defines the effect of selective ideal interpolation multiplied by the inverse of the coarse-grid problem. Due to the selective ideal interpolation operator, exactly $m$ points are considered, namely the $C$-point to be updated and the following $F$-interval consisting of $m-1$ points. All other points are not changed by the update of one $p$ and the corresponding rows are therefore zero.
As a consequence, the product $P_S^{(p)}(A_c^{(p)})^{-1}R_I^{(p)}A$ also has only $m$ nonzero rows. Furthermore, we have exactly $k+1$ blocks of $m \times m$ matrices which are not equal to zero. The matrix $P_S^{(p)}(A_c^{(p)})^{-1}R_I^{(p)}A$ in block form is given by
\begin{center}
\begin{equation}\label{eq:a2}
\begin{tikzpicture}[
baseline={-0.5ex},
mymatrixenv]

\matrix [matrix of math nodes,nodes in empty cells,left delimiter={[},right delimiter={]},inner sep=1pt,outer sep=1.5pt,column sep=2pt,row sep=2pt,nodes={anchor=center,inner sep=0pt,outer sep=0pt},inner sep=4pt,label={[]right:$\; \; \; \, ,$},
column 1/.style={text width=3em, align=center},
column 2/.style={text width=4em, align=center},
column 3/.style={text width=4em, align=center},
column 4/.style={text width=4em, align=center},
column 5/.style={text width=4em, align=center},
column 6/.style={text width=4em, align=center},
column 7/.style={text width=3em, align=center},
row 1/.style={minimum height=2em, align=center},
row 2/.style={text centered,minimum height = 3em},
row 3/.style={minimum height=2em, align=center}] (m)
{
&&&&&&  \\
& \mathcal{D} &\mathcal{V}_{k-2} &\hdots &\mathcal{V}_0 & \mathcal{S} &  \\
&&&&&&  \\
};

\mymatrixbraceright{1}{1}{$\scriptscriptstyle pm$}
\mymatrixbraceright{2}{2}{$\scriptscriptstyle m$}
\mymatrixbraceright{3}{3}{$\scriptscriptstyle N_t-(p+1)m$}

\mymatrixbracetop{1}{1}{$\scriptscriptstyle (p-k)m$}
\mymatrixbracetop{2}{2}{$\scriptscriptstyle m$}
\mymatrixbracetop{3}{5}{$\scriptscriptstyle (k-1)m$}
\mymatrixbracetop{6}{6}{$\scriptscriptstyle m$}
\mymatrixbracetop{7}{7}{$\scriptscriptstyle N_t-(p+1)m$}

\draw[black, thick, dashed] (m-1-1.north east) -- (m-3-1.south east);     
\draw[black, thick, dashed] (m-1-2.north east) -- (m-3-2.south east);
\draw[black, thick, dashed] (m-1-3.north east) -- (m-3-3.south east);
\draw[black, thick, dashed] (m-1-4.north east) -- (m-3-4.south east);
\draw[black, thick, dashed] (m-1-5.north east) -- (m-3-5.south east);
\draw[black, thick, dashed] (m-1-6.north east) -- (m-3-6.south east);

\draw[black, thick, dashed] (m-1-1.south west) -- (m-1-7.south east);  
\draw[black, thick, dashed] (m-2-1.south west) -- (m-2-7.south east);  

\end{tikzpicture}
\end{equation}
\end{center}
with blocks
\begin{center}
\begin{equation}\label{eq:exact_s_d_v}
\begin{tikzpicture}[baseline={-0.5ex},mymatrixenv]

\matrix [mymatrix,inner sep=4pt,label={[]left:$\mathcal{S}=$},label={[]right:,}] (m)
{
	I 	       & \mathbf{0} & \hdots & \mathbf{0}  \\
	\Phi       & \vdots & & \vdots  \\
	\vdots     & \vdots & & \vdots \\
	\Phi^{m-1} & \mathbf{0} & \hdots & \mathbf{0} \\
};     

\matrix [mymatrix,inner sep=4pt,label={[]left:$\mathcal{D}=$},label={[]right:,}] (m) at (6,0)
{
	\mathbf{0}&\hdots&\mathbf{0}& -\Phi^{(k-1)m+1}  \\
	\mathbf{0}&\hdots&\mathbf{0}& -\Phi^{(k-1)m+2}  \\
	\vdots    &      &\vdots    & \vdots  \\
	\mathbf{0}&\hdots&\mathbf{0}& -\Phi^{km} \\
};     

\matrix [mymatrix,inner sep=4pt,label={[]left:$\mathcal{V}_x=$},label={[]right:.}] (m) at (2,-3)
{
	\Phi^{(x+1)m}  &\mathbf{0}&\hdots&\mathbf{0}& -\Phi^{xm+1}  \\
	\Phi^{(x+1)m+1}&\mathbf{0}&\hdots&\mathbf{0}& -\Phi^{xm+2}  \\
	\vdots         &\vdots    &      &\vdots    & \vdots  \\
	\Phi^{(x+2)m-1}&\mathbf{0}&\hdots&\mathbf{0}& -\Phi^{(x+1)m} \\
};

\end{tikzpicture}
\end{equation}
\end{center}
Here, $\mathcal{D}$ comes from the truncated coarse-grid points, $\mathcal{V}_{k-2},\hdots, \mathcal{V}_0$ represent the first $k-1$ local coarse-grid points, and $\mathcal{S}$ corresponds to the last point of the local coarse grid.  Note, the $\mathcal{S}$-block is the diagonal block of the larger matrix. Now, we consider the operator $PR_I$, which is equivalent to an $F$-relaxation, and globally given by
\begin{equation*}
PR_I = \begin{bmatrix}
\mathcal{S}&&  \\

&\ddots& \\
&&\mathcal{S} \end{bmatrix}.
\end{equation*}
We can now calculate the error-propagation $(I - \sum_{p=0}^{N_T} P_S^{(p)}( A_c^{(p)})^{-1} R_I^{(p)} A)PR_I$ by exploiting the structure of the matrices $P_S^{(p)}(A_c^{(p)})^{-1}R_I^{(p)}A$ and $PR_I$. Instead of computing the complete matrix, we can compute the blocks $-\mathcal{D}\mathcal{S}, -\mathcal{V}_x\mathcal{S}$ for $x=k-2,\hdots,0$, and $(I-\mathcal{S})\mathcal{S}$. Note that the identity term is added to $-\mathcal{S}$ because $\mathcal{S}$ is the diagonal block of $P_S^{(p)}(A_c^{(p)})^{-1}R_I^{(p)}A$. Working through the algebra yields $-\mathcal{V}_x\mathcal{S}=\mathbf{0}$ for $x=k-2,\hdots,0$, $(I-\mathcal{S})\mathcal{S} = \mathcal{S}^2 - \mathcal{S} =\mathbf{0}$, and
\begin{center}
\begin{tikzpicture}[baseline={-0.5ex},mymatrixenv]

\matrix [mymatrix,inner sep=4pt,label={[]left:$-\mathcal{D}\mathcal{S}=$}] (m) at (6,0)
{
\Phi^{km} & \mathbf{0} & \hdots & \mathbf{0} \\
\Phi^{km+1} & \mathbf{0} & \hdots & \mathbf{0} \\
\vdots & \vdots & \vdots & \vdots \\
\Phi^{km+m-1} & \mathbf{0} & \hdots & \mathbf{0} \\
};  
\end{tikzpicture},
\end{center}
which is equivalent to the definition of $\mathcal{G}$ in \eqref{eq:error_exact}. Note that for the case $p<k$ in matrix \eqref{eq:a2} the operator $\mathcal{D}$ is omitted, since for these $C$-points all previous $C$-points are contained in the local coarse grid. 

\section{Error propagation for approximate local coarse problems}\label{app:AppendixB}

The definition $R_I^{(p)}A$ is the same as in \eqref{eq:a1}, but now
\begin{center}
\begin{tikzpicture}[baseline={-0.5ex},mymatrixenv]

\matrix [mymatrix,inner sep=4pt,label={[]left:$P_S^{(p)}\widetilde{A_c}^{(p)}=$}] (m)
{
&                &&&  \\

\Phi^{0}\Psi^{k-1}           &\hdots& \Phi^{0}\Psi^{2}            &\Phi^{0}\Psi           &\Phi^{0}  \\

\vdots               &      & \vdots              &\vdots         &\vdots  \\
\Phi^{m-1}\Psi^{k-1} &\hdots& \Phi^{m-1}\Psi^{2}  &\Phi^{m-1}\Psi &\Phi^{m-1}  \\

&                &&&  \\
};

\mymatrixbraceright{1}{1}{$\scriptscriptstyle pm$}
\mymatrixbraceright{2}{4}{$\scriptscriptstyle m$}
\mymatrixbraceright{5}{5}{$\scriptscriptstyle N_t-(p+1)m$}
\mymatrixbracetop{1}{5}{$\scriptscriptstyle k$}

\draw[black, thick, dashed] (m-1-1.south west) -- (m-1-5.south east);  
\draw[black, thick, dashed] (m-4-1.south west) -- (m-4-5.south east);  

\end{tikzpicture}.
\end{center}
As a result, we get a block matrix equivalent to \eqref{eq:a2}, but this time with block matrices $\widetilde{\mathcal{V}_x}$ and $\widetilde{\mathcal{D}}$ given by
\begin{center}
\begin{tikzpicture}[baseline={-0.5ex},mymatrixenv]

\matrix [mymatrix,inner sep=4pt,label={[]left:$\widetilde{\mathcal{V}_x}=$}] (m)
{
	\Phi^{0}\Psi^{(x+1)}  &\mathbf{0}&\hdots&\mathbf{0}& -\Phi^{0}\Psi^{x} \Phi  \\
	
	\vdots         &\vdots    &      &\vdots    & \vdots  \\
	\Phi^{m-1}\Psi^{(x+1)}&\mathbf{0}&\hdots&\mathbf{0}& -\Phi^{m-1} \Psi^{x} \Phi \\
};     

\matrix [mymatrix,inner sep=4pt,label={[]left:$\widetilde{\mathcal{D}}=$}] (m) at (6.3,0)
{
	\mathbf{0}&\hdots&\mathbf{0}& -\Phi^{0}\Psi^{k-1} \Phi  \\
	
	\vdots    &      &\vdots    & \vdots  \\
	\mathbf{0}&\hdots&\mathbf{0}& -\Phi^{m-1} \Psi^{k-1} \Phi\\
};     

\end{tikzpicture}
\end{center}
Note, that $\mathcal{S}$ is the same as given in \eqref{eq:exact_s_d_v}. Again, we use the structure of the matrices and calculate block submatrices of $P_S^{(p)}(\widetilde{A_c}^{(p)})^{-1}R_I^{(p)}APR_I$ given by
\begin{center}
\begin{tikzpicture}[baseline={-0.5ex},mymatrixenv]

\matrix [mymatrix,inner sep=4pt,label={[]left:$-\widetilde{\mathcal{V}_x}\mathcal{S} =$}] (m)
{
	\Phi^{0}\Psi^{x}(\Phi^{m}-\Psi) & \mathbf{0} & \hdots & \mathbf{0} \\
	
	\vdots & \vdots & \vdots & \vdots\\
	\Phi^{m-1}\Psi^{x}(\Phi^{m}-\Psi) & \mathbf{0} & \hdots & \mathbf{0} \\
};

\matrix [mymatrix,inner sep=4pt,label={[]left:$-\widetilde{\mathcal{D}}\mathcal{S} =$}] (m)at  (6.5,0)  
{
	\Phi^{0}\Psi^{k-1}\Phi^{m} & \mathbf{0} & \hdots & \mathbf{0} \\
	
	\vdots & \vdots & \vdots & \vdots\\
	\Phi^{m-1}\Psi^{k-1}\Phi^{m} & \mathbf{0} & \hdots & \mathbf{0} \\
};

\end{tikzpicture}
\end{center}
where $-\widetilde{\mathcal{V}_x}\mathcal{S}$ is equivalent to $\mathcal{Z}_x$ from \eqref{eq:error_inexact_z_w} and $-\widetilde{\mathcal{D}}\mathcal{S}$ is equivalent to $\mathcal{W}$ from \eqref{eq:error_inexact_z_w}.

\section{Discussion spatial parallelism}\label{app:AppendixC}

Here we demonstrate that the use of spatial parallelism has comparable effects on sequential time-stepping (before saturation) as it does on \namealg. In particular, we emphasize that when spatial parallelism saturates, the observed near-perfect speedup obtained by spatial parallelism before saturation will extend to \namealgnospace. \Cref{tab:spatial_parallelism} presents overall runtimes for using one and four processes in space for time-stepping, Parareal, and two-level \namealgnospace, the last two using 64 processes in time (same variants as in \Cref{tab:gray_two_level}). We see that for all algorithms we get a speedup of about 3.8 by using four spatial processes compared to one process. Note that this problem scales well with spatial parallelization, and spatial parallelism (as in most cases) should be the first choice. However, spatial parallelization is exhausted at some point and temporal parallelization can then provide additional speedups. 

\begin{table*}
	\renewcommand{\arraystretch}{1.3}
	\begin{center}
	\begin{tabular}{ |c|c|c|c|c|c|c|c|c|}
		\hline
		\multirow{2}{*}{\parbox{2.2cm}{\centering Space \\ processes}} & \multirow{2}{*}{\parbox{2.7cm}{\centering Time-stepping \\ one time process}}  & \multirow{2}{*}{\parbox{2.7cm}{ \centering Parareal \\ 64 time processes}} & \multirow{2}{*}{\parbox{3.5cm}{\centering Two-level \namealg \\ 64 time processes}} \\
		 	   &            &     &  \\ \hline
1 &    347{,}666 s   &  91{,}596 s & 79{,}397 s     \\ \hline
4 &    92{,}473 s        & 23{,}708 s &    20{,}411 s   \\ \hline
		\end{tabular}
	\end{center}
	\caption{Total runtimes using one and four processes in space for time-stepping, Parareal, and \namealg with $k=32$, the latter two using a coarsening factor of 256 and 64 processes in time.}
	\label{tab:spatial_parallelism}
	\vspace{-2ex}
\end{table*}

\section*{Acknowledgments}
We would like to thank Dr. Wayne Mitchell for helpful discussions that initiated this research. Los Alamos National Laboratory report number LA-UR-21-26105.

\bibliographystyle{siamplain}
\bibliography{references}
\end{document}